\title{All Ramsey critical graphs for a large tree versus $tK_{m}$}
\author{Zhiyu Cheng \thanks{School of Mathematics and Statistics, Hainan University, Haikou, Hainan 570028, PR China. Email: 3426315139@qq.com}
\and Zhidan Luo \thanks{School of Mathematics and Statistics, Hainan University, Haikou, Hainan 570028, PR China. Research supported by the National Natural Science Foundation of China (No.12401449), the Hainan Provincial Natural Science Foundation of China (No.125QN209) and the Hainan University Research Foundation Project (No. KYQD(ZR)-23155). Email: luodan@hainanu.edu.cn}
\and Pingge Chen \thanks{Corresponding author. College of Science, Hunan University of Technology, Zhuzhou, Hunan 412007, PR China. Research supported by the National Natural Science Foundation of China (No.12101221). Email:chenpingge@hut.edu.cn}
}
\date{}
\newtheorem{theo}{Theorem}[section]
\newtheorem{remark}[theo]{Remark}
\newtheorem{lemma}[theo]{Lemma}
\newtheorem{claim}[theo]{Claim}
\def\q{\hspace*{\fill}$\Box$\medskip}
\begin{document}
\maketitle
\begin{abstract}
Let $H, H_{1}$ and $H_{2}$ be graphs, and let $H\rightarrow (H_{1}, H_{2})$ denote that any red-blue coloring of $E(H)$ yields a red copy of $H_{1}$ or a blue copy of $H_{2}$. The Ramsey number for $H_{1}$ versus $H_{2}$, $r(H_{1}, H_{2})$, is the minimum integer $N$ such that $K_{N}\rightarrow (H_{1}, H_{2})$. The Ramsey critical graph $H$ for $H_{1}$ versus $H_{2}$ is a red-blue edge-colored $K_{N- 1}$ such that $H\not\rightarrow (H_{1}, H_{2})$, where $N= r(H_{1}, H_{2})$. In this paper, we characterize all Ramsey critical graphs for a large tree versus $tK_{m}$. As a corollary, we determine the star-critical Ramsey number for a large tree versus $tK_{m}$.

  \noindent \textbf{Keywords:} Ramsey critical graphs; Star-critical Ramsey number; Tree; Complete graphs.

  \noindent \textbf{2020 MSC:} 05C55; 05D10.
\end{abstract}

\section{Introduction}
  All graphs in this paper are simple graphs. For a graph $G$, let $V(G)$ and $E(G)$ be the vertex set and the edge set of $G$, respectively. Let $K_{n}$ be the complete graph of $n$ vertices. For graphs $H, H_{1}$ and $H_{2}$, let $H\rightarrow (H_{1}, H_{2})$ denote that any red-blue coloring of $E(G)$ yields a red copy of $H_{1}$ or a blue copy of $H_{2}$. The {\it Ramsey number} for $H_{1}$ versus $H_{2}$, $r(H_{1}, H_{2})$, is the minimum integer $N$ such that $K_{N}\rightarrow (H_{1}, H_{2})$. Let $v(G)$ and $\chi(G)$ be the number of vertices of $G$ and the chromatic number of $G$, respectively. Let $s(G)$ be the chromatic surplus of $G$, which is the cardinality of a minimum color class overall the proper colorings of $G$ with $\chi(G)$ colors. In 1981, Burr proved the following lower bound about the Ramsey number.
  \begin{theo}[{\rm Burr \cite{B}}]\label{theo1.1}
    For a connected graph $G$ and a graph $H$ with $v(G)\geq s(H)$,
    $$r(G,H) \geq (v(G)-1)(\chi(H)-1)+s(H).$$
  \end{theo}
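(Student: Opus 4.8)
The plan is to establish the bound by exhibiting, for $N = (v(G)-1)(\chi(H)-1)+s(H)$, an explicit red-blue coloring of $K_{N-1}$ that contains neither a red copy of $G$ nor a blue copy of $H$; such a coloring witnesses $K_{N-1}\not\rightarrow(G,H)$ and hence $r(G,H)\ge N$. This is the classical ``clusters of cliques'' lower-bound construction, and the whole argument is essentially a matter of correctly invoking the definitions of $\chi(H)$ and $s(H)$.

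First I would set up the construction. Partition the $N-1 = (v(G)-1)(\chi(H)-1)+s(H)-1$ vertices into $\chi(H)-1$ blocks $B_1,\dots,B_{\chi(H)-1}$, each of size $v(G)-1$, together with one further block $B_0$ of size $s(H)-1$ (a legitimate size, since $s(H)\le v(G)$ by hypothesis, so in particular $s(H)-1\le v(G)-1$). Color an edge red if its two endpoints lie in the same block, and blue otherwise. Then the red graph is a vertex-disjoint union of cliques $K_{v(G)-1}$ and one $K_{s(H)-1}$, while the blue graph is the complete multipartite graph with parts $B_0,B_1,\dots,B_{\chi(H)-1}$. To rule out a red $G$: every connected subgraph of the red graph lies inside a single block, so it has at most $\max\{v(G)-1,\ s(H)-1\}=v(G)-1$ vertices, where the equality uses exactly the hypothesis $v(G)\ge s(H)$. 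Since $G$ is connected on $v(G)$ vertices, it cannot be embedded into a vertex set of size less than $v(G)$, so no red $G$ occurs. (This is precisely the point at which $v(G)\ge s(H)$ is needed: it guarantees that even the short block $B_0$ is too small to host $G$.)

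Next I would rule out a blue $H$. Suppose $H$ appeared as a blue subgraph. Since no blue edge joins two vertices of a common block, the partition of $V(H)$ induced by the blocks $B_0,B_1,\dots,B_{\chi(H)-1}$ is a proper coloring of $H$ using at most $\chi(H)$ colors. It cannot use at most $\chi(H)-1$ nonempty color classes, else $\chi(H)$ would not be the chromatic number of $H$; hence it uses exactly $\chi(H)$ color classes, one inside each block, and in particular the color class lying inside $B_0$ has at most $s(H)-1$ vertices. Thus this proper $\chi(H)$-coloring of $H$ has a color class of size at most $s(H)-1 < s(H)$, contradicting the definition of $s(H)$ as the minimum, over all proper $\chi(H)$-colorings of $H$, of the size of a smallest color class. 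Therefore no blue $H$ occurs either, and we conclude $r(G,H)\ge N$. I do not expect a genuine obstacle here; the only care needed is the bookkeeping around the definition of $s(H)$ and, if one wishes to be scrupulous, the degenerate case in which $H$ has no edges (so $\chi(H)=1$, $s(H)=0$ and the asserted bound $r(G,H)\ge v(G)-1$ is trivial), which is most cleanly disposed of separately.
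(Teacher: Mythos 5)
The paper states this theorem as a cited result of Burr and gives no proof of its own, so there is nothing internal to compare against; your construction ($\chi(H)-1$ red cliques of size $v(G)-1$ plus one red clique of size $s(H)-1$, all cross-edges blue, with the blue multipartite structure forcing any blue $H$ to yield a proper $\chi(H)$-coloring having a color class of size at most $s(H)-1$) is exactly the classical lower-bound argument and is correct. One small slip in your closing aside: if $H$ has no edges then $\chi(H)=1$ forces $s(H)=v(H)$ rather than $s(H)=0$ (an empty color class in a proper $\chi(H)$-coloring would contradict the minimality of $\chi(H)$, so in fact $s(H)\geq 1$ always), but this case needs no separate treatment since your main construction already covers it: the lone block $B_0$ has only $s(H)-1=v(H)-1$ vertices and so cannot host a copy of $H$.
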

  If the equality holds, then we say that $G$ is $H$-good. Let $T_{n}$ be a tree of $n$ vertices. In 1977, Chv\'atal proved that
  \begin{theo}[{\rm Chv\'atal \cite{C}}]\label{theo1.2}
    $T_{n}$ is $K_m$-good for all positive integers $n$ and $m$.
  \end{theo}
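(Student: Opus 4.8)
The statement is equivalent to $r(T_n,K_m)=(n-1)(m-1)+1$. The lower bound is Theorem~\ref{theo1.1} applied with $G=T_n$ and $H=K_m$: here $\chi(K_m)=m$ and $s(K_m)=1$ (in the unique proper $m$-coloring of $K_m$ every color class is a singleton), and $v(T_n)=n\ge 1=s(K_m)$, so $r(T_n,K_m)\ge (n-1)(m-1)+1$. The work is the upper bound $K_N\to(T_n,K_m)$ with $N=(n-1)(m-1)+1$, and I would not use induction on $m$ for it.

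First I would record the standard greedy embedding lemma: if a graph $H$ satisfies $\delta(H)\ge n-1$, then $H$ contains a copy of every tree on $n$ vertices. To see this, list the vertices of the target tree as $v_1,\dots,v_n$ so that each $v_i$ with $i\ge 2$ has exactly one neighbor among $v_1,\dots,v_{i-1}$ (possible for any tree by repeatedly deleting leaves), embed $v_1$ arbitrarily, and at step $i$ map $v_i$ to a neighbor of the image of its unique predecessor that has not yet been used; such a vertex exists because that image has at least $n-1\ge i-1$ neighbors while at most $i-2$ of them are already occupied. The key corollary is structural: if $G$ is $T_n$-free then so is every subgraph of $G$, hence by the lemma every subgraph of $G$ has a vertex of degree at most $n-2$; thus $G$ is $(n-2)$-degenerate and $\chi(G)\le n-1$.

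Now take any red-blue coloring of $E(K_N)$ containing no red $T_n$, and let $R$ be the red graph. By the corollary $\chi(R)\le n-1$, so the vertex set of $K_N$ is partitioned into at most $n-1$ color classes of $R$, each of which induces a complete blue graph. Since $N=(n-1)(m-1)+1$, some class has size at least $\lceil N/(n-1)\rceil=m$, yielding a blue $K_m$; this is precisely where the exact value of $N$ is used. (When $n=1$ the claim is the trivial identity $r(K_1,K_m)=1$.) I expect the only delicate point to be the bookkeeping in the embedding lemma — the choice of vertex ordering of the tree and the count of already-used neighbors — since everything afterward is forced.
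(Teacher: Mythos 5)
Your proposal is correct. Note that the paper does not prove this statement at all: Theorem~\ref{theo1.2} is quoted as a known result with only the citation to Chv\'atal, so the only meaningful comparison is with the classical argument. Your lower bound is exactly Burr's bound (Theorem~\ref{theo1.1}) with $\chi(K_m)=m$, $s(K_m)=1$, which is fine (equivalently, the explicit extremal coloring consisting of $m-1$ disjoint red copies of $K_{n-1}$ joined in blue). Your upper bound is a valid non-inductive route: the greedy embedding lemma (minimum degree at least $n-1$ forces every $n$-vertex tree) upgraded to the statement that a $T_n$-free graph is $(n-2)$-degenerate, hence properly $(n-1)$-colorable, after which the pigeonhole on $N=(n-1)(m-1)+1$ vertices produces a blue $K_m$; the bookkeeping in the embedding (leaf ordering, at most $i-2$ occupied neighbours) and the boundary cases $n=1$, $m=1$ all check out. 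Chv\'atal's original one-page proof instead inducts on $m$: either some vertex has blue degree at least $(n-1)(m-2)+1$, and the induction hypothesis inside its blue neighbourhood yields a red $T_n$ or a blue $K_{m-1}$ extending to a blue $K_m$, or else the red graph has minimum degree at least $n-1$ and the same embedding lemma finishes. The two arguments share the embedding lemma; yours trades the induction for the degeneracy/chromatic-number step, which gives a slightly more structural conclusion (a partition of the whole vertex set into at most $n-1$ blue cliques, very much in the spirit of how the present paper uses Theorem~\ref{theo2.1}), while the inductive proof is the one that generalizes most directly to arguments like those in Sections 2 and 3 of this paper.
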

  In 1983, Burr and Erd\H{o}s \cite{BE} proved that for positive integers $k$ and $m$, there is an integer $n_{0}$ such that the family of connected graphs with bandwidth at most $k$ and at least $n_{0}$ vertices is $K_{m}$-good, where the bandwidth of a graph $G$ is the smallest integer $\ell$ such that there is an ordering $v_{1}, \dots, v_{n}$ of $V(G)$ satisfying $|i- j|\leq \ell$ for each edge $v_{i}v_{j}\in E(G)$. In 2013, Allen, Brightwell and Skokan \cite{ABS} extended it. They proved that for a fixed integer $\Delta$ and a graph $H$, there are integers $c$ and $n_{0}$ such that if $G$ is a connected graph of at least $n_{0}$ vertices with bandwidth at most $cn$ and maximum degree $\Delta$, then $G$ is $H$-good. In 2017, Pokrovskiy and Sudakov \cite{PS} proved that if $H$ is a graph and $n\geq 4v(H)$, then the path of $n$ vertices is $H$-good. Furthermore, Balla, Pokrovskiy and Sudakov \cite{BPS} proved that for all fixed integers $\Delta$ and $k$, there is a constant $C_{\Delta,k}$ such that if the maximum degree of $T_{n}$ is at most $\Delta$ and $H$ is a graph with $\chi(H)= k$ satisfying $n\geq C_{\Delta, k}v(H)\log^{4}v(H)$, then $T_{n}$ is $H$-good. For graphs $G$ and $H$, let $G\cup H$ be the union of vertex-disjoint copies of $G$ and $H$. For a positive integer $t$, let $tG$ be the union of $t$ vertex-disjoint copies of $G$. Let $\alpha(G)$ be the maximum size of independent sets of $G$. In 1975, Burr, Erd\H{o}s and Spencer\cite{BES} proved that $mv(G)+nv(H)- \min\{m\alpha (G),n\alpha (H)\}-1\leq r(mG,nH)\leq mv(G)+nv(H)-\min\{m\alpha (G),n\alpha (H)\}+c_{1}$, where $c_{1}$ is a constant depending only on $G$ and $H$. Furthermore, Burr \cite{B1} proved that $r(G,nH)=nv(H)+r(\mathcal{D}(G),H)-1$ for sufficiently large $n$, where $H$ is a connected graph and $\mathcal{D}(G)$ is a set of all graphs formed from $G$ by removing a maximal independent set. In 2023, Theorem \ref{theo1.2} was extended to $tK_{m}$.
  \begin{theo}[\rm{Hu and Peng \cite{HP1}, Luo and Peng \cite{LP}}]\label{theo1.3}
    Let $n, m$ and $t$ be positive integers. If $n$ is sufficiently large, then $T_{n}$ is $tK_{m}$-good.
  \end{theo}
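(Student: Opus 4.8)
\noindent The plan is to prove the two inequalities separately: the lower bound is immediate, and essentially all the work lies in the upper bound.

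For the lower bound I would apply Theorem~\ref{theo1.1}. We have $\chi(tK_m)=m$, and in any proper $m$-colouring of $tK_m$ each of the $t$ copies of $K_m$ uses all $m$ colours exactly once, so every colour class has exactly $t$ vertices and $s(tK_m)=t$. Since $v(T_n)=n\ge t=s(tK_m)$ once $n$ is large, Theorem~\ref{theo1.1} yields $r(T_n,tK_m)\ge (n-1)(m-1)+t$. (This is also witnessed directly by the colouring of $K_{(n-1)(m-1)+t-1}$ built from $m-1$ vertex-disjoint red copies of $K_{n-1}$ together with $t-1$ extra vertices, all remaining edges blue: the red components have order $n-1$, so there is no red $T_n$, while every blue $K_m$ must contain one of the $t-1$ extra vertices, so there is no blue $tK_m$.)

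For the upper bound, set $N=(n-1)(m-1)+t$ and take any red--blue colouring of $K_N$ with no red $T_n$; I must find a blue $tK_m$. The basic tool is the greedy tree-embedding lemma: a graph of minimum degree at least $n-1$ contains every tree on $n$ vertices (order the tree so that every vertex after the first has a unique earlier neighbour and embed one vertex at a time, which needs only $n-1$ free neighbours at each step). Hence the red graph $R$ has empty $(n-1)$-core, so it is $(n-2)$-degenerate and $\chi(R)\le n-1$. I would then split on the maximum red degree. If $\Delta(R)\le n-2$, apply the Hajnal--Szemer\'edi theorem to obtain an equitable $(n-1)$-colouring of $R$; since $(n-1)(m-1)<N\le (n-1)m$ for $n$ large, its classes have size $m-1$ or $m$, with exactly $N-(n-1)(m-1)=t$ of them of size $m$, and these $t$ classes are pairwise-disjoint independent sets of $R$, i.e.\ disjoint blue copies of $K_m$. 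If $\Delta(R)\ge n-1$, let $X$ be the set of red vertices of degree at least $n-1$; I would show that either some member of $X$ lets one complete a greedy embedding of $T_n$ through it (a contradiction), or $X$ is small and ``blue-usable'': then $R-X$ has maximum degree at most $n-2$, Hajnal--Szemer\'edi applied to $R-X$ produces $t-|X|$ disjoint blue $K_m$'s plus many leftover size-$(m-1)$ blue cliques, and one greedily completes $|X|$ of those with distinct vertices of $X$ to which they are blue-complete, again obtaining $t$ disjoint blue $K_m$'s. Throughout, one tracks a threshold $n_0(t,m)$, with the case $t=1$ supplied by Theorem~\ref{theo1.2}.

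The decisive difficulty is the case $\Delta(R)\ge n-1$: it requires a stability-type description of $n$-vertex-tree-free graphs on roughly $(m-1)n$ vertices --- essentially that such a graph is close to $m-1$ vertex-disjoint cliques of order about $n-1$ together with a bounded remainder --- and this is what lets one bound $|X|$, control the blue edges between $X$ and the rest, and carry out the bookkeeping needed to keep all chosen vertices distinct and all required edges blue. By contrast the lower bound, the degeneracy observation, and the case $\Delta(R)\le n-2$ are routine.
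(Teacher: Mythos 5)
Your lower bound and your treatment of the case $\Delta(R)\le n-2$ are fine: there Hajnal--Szemer\'edi (Theorem~\ref{theo2.1}) applied with $\ell=n-1$ to the red graph on $(n-1)(m-1)+t$ vertices produces exactly $t$ independent classes of size $m$, i.e.\ a blue $tK_m$, and this is precisely the mechanism used in Section~2 of the paper for the star $K_{1,n-1}$, where the hypothesis $\Delta(R)<n-1$ comes for free. But note that the paper does not prove Theorem~\ref{theo1.3} at all --- it is quoted from Hu--Peng and Luo--Peng --- so the only question is whether your sketch is a proof, and it is not: the case $\Delta(R)\ge n-1$ is a genuine gap, and it is the whole content of the theorem. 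For a general tree $T_n$, a vertex of red degree at least $n-1$ gives you nothing (it only helps when $T_n$ is a star), so your proposed dichotomy ``either some member of $X$ completes a greedy embedding of $T_n$, or $X$ is small and blue-usable'' is unsupported; you then say this case ``requires a stability-type description of $T_n$-free graphs on roughly $(m-1)n$ vertices'' which would let you bound $|X|$ and do the bookkeeping, but you never supply that description. That missing ingredient is exactly the hard part, so what you have is a plan with the decisive step acknowledged but unproven.

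For comparison, the known proofs (and the arguments of Section~3 of this paper, which reuse the same toolkit for the critical-graph classification) do not attempt a degree-threshold dichotomy on the whole red graph. They handle an arbitrary large tree via the Burr--Faudree trichotomy (Lemma~\ref{lemma3.4}): $T_n$ contains a long suspended path, or many independent end-edges, or a large talon. The suspended-path case is treated with the Erd\H{o}s--Faudree--Rousseau--Schelp path-lengthening lemma (Lemma~\ref{lemma3.5}), the end-edge case with Hall's theorem (Theorem~\ref{theo2.2}), and the talon case reduces to the star, where the Hajnal--Szemer\'edi argument you wrote out applies because $\Delta(R)<n-1$ is forced; everything is glued together by induction on $m$ and $t$. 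If you want to turn your sketch into a proof, you should either prove the stability statement you invoke or switch to this structural route.
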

  Recently, Hu and the second author \cite{HL} extended $tK_{m}$ to the union of vertex-disjoint copies of different sizes of complete graphs. Zhang and Chen \cite{ZC} extended $T_{n}$ to a sparsely connected graph. For more results on the Ramsey number, we recommend \cite{CFS} and \cite{R}.

  Let $H$ be a subgraph of $G$ and let $G- H$ be the graph obtained from $G$ by removing a copy of $H$, that is, $V(G- H)= V(G)$ and $E(G- H)= E(G)\backslash E(H)$. Let $K_{1, n}$ be the star of $n+ 1$ vertices. In 2010, Hook \cite{H1} introduced the {\it star-critical Ramsey number} for $H_{1}$ versus $H_{2}$, $r_{*}(H_{1}, H_{2})$, which is the minimum integer $k$ such that $K_{N}- K_{1,N- 1- k}\rightarrow (H_{1}, H_{2})$, where $N= r(H_{1}, H_{2})$. For more results on the star-critical Ramsey number, we recommend \cite{Bu}.

  The {\it Ramsey critical graph} $H$ for $H_{1}$ versus $H_{2}$ is a red-blue edge-colored $K_{N- 1}$ such that $H\not\rightarrow (H_{1}, H_{2})$, where $N= r(H_{1}, H_{2})$. For graphs $G$ and $H$, the join of $G$ and $H$, $G\vee H$, is the graph obtained from $G\cup H$ by adding all edges between $V(G)$ and $V(H)$. For positive integers $n, m$ and $t$, let $H_{i}= K_{n- 1}$ for each $i\in [m- 1]$ and $H_{m}= K_{t- 1}$. Moreover, let $\mathcal{G}_{n, m, t}= \{\bigvee_{i= 1}^{m} H_{i}:$ color $E(H_{i})$ by red for each $i\in [m- 1]$, color $E(V(H_{i}), V(H_{j}))$ by blue for each $i, j\in [m]$ such that $i\neq j$, and color $E(H_{m})$ arbitrarily by red and blue$\}$. One can directly verify that if $n$ is sufficiently large, then $G$ is a Ramsey critical graph for $T_{n}$ versus $tK_{m}$ for each $G\in \mathcal{G}_{n, m, t}$ since $r(T_{n}, tK_{m})= (n- 1)(m- 1)+ t$ by Theorem \ref{theo1.3}. In 2011, Hook and Isaak \cite{HI} proved that if $G$ is a Ramsey critical graph for $T_{n}$ versus $K_{m}$, then $G\in \mathcal{G}_{n, m, 1}$. Furthermore, they proved that $r_{*}(T_{n}, K_{m})= (n- 1)(m- 2)+ 1$ for $m\geq 2$. In this paper, we prove the same results for all $t$.

  \begin{theo}\label{theo1.6}
    Let $n, m$ and $t$ be positive integers, and let $G$ be a Ramsey critical graph for $T_{n}$ versus $tK_{m}$. If $n$ is sufficiently large, then $G\in \mathcal{G}_{n, m, t}$.
  \end{theo}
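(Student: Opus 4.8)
Set $N=r(T_n,tK_m)=(n-1)(m-1)+t$ by Theorem~\ref{theo1.3}, let $G$ be a critical colouring of $K_{N-1}$, and write $R$ and $B$ for its red and blue subgraphs, so $R$ has no $T_n$ and $B$ has no $tK_m$. The plan is to reduce the statement to the single claim

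$(\star)$\quad\emph{$R$ contains $m-1$ pairwise vertex-disjoint copies of $K_{n-1}$,}

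\noindent and to deduce $(\star)\Rightarrow G\in\mathcal{G}_{n,m,t}$ from two elementary facts about $T_n$-free graphs. First, any graph of minimum degree at least $n-1$ contains $T_n$ (embed the tree greedily). Second, if $A$ is a vertex set with $|A|\ge n-1$, $R[A]$ complete, and some $u\in A$ has a red neighbour $v\notin A$, then, writing $\ell$ for a leaf of $T_n$ with neighbour $p$ and embedding $T_n-\ell$ into $R[A]$ with $p\mapsto u$ and $\ell\mapsto v$, one obtains a red $T_n$; hence every red clique of size at least $n-1$ is a connected component of $R$. Granting $(\star)$, the $m-1$ copies $V_1,\dots,V_{m-1}$ of $K_{n-1}$ are thus components of $R$, every edge leaving them is blue, the residual set $V_m=V(G)\setminus(V_1\cup\dots\cup V_{m-1})$ has exactly $(n-1)(m-1)+t-1-(m-1)(n-1)=t-1$ vertices, every edge between two distinct parts is blue, and $R[V_m]$ is an arbitrary colouring of $K_{t-1}$; that is precisely $G\in\mathcal{G}_{n,m,t}$.

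To prove $(\star)$ I would first pin down a maximum blue $K_m$-packing. Since $G$ has $(n-1)(m-1)+t-1=r(T_n,(t-1)K_m)$ vertices (Theorem~\ref{theo1.3} applied to $(t-1)K_m$, legitimate as $n$ is large) and no red $T_n$, it contains a blue $(t-1)K_m$; as it has no blue $tK_m$, a maximum blue $K_m$-packing $\mathcal P$ has exactly $t-1$ members. Put $U=V(G)\setminus V(\mathcal P)$, so $|U|=(m-1)(n-t)$ and the colouring restricted to $U$ has no red $T_n$ and no blue $K_m$. A Chv\'atal-type estimate inside $U$ (as in the proof of Theorem~\ref{theo1.2}, using $r(T_n,K_{m-1})=(n-1)(m-2)+1$) shows that every $v\in U$ has blue degree at most $(n-1)(m-2)$ within $U$, hence red degree at least $|U|-1-(n-1)(m-2)=n-c$ within $U$, for a constant $c=c(m,t)$. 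A counting argument (each component of $R[U]$ has more than $n-c$ vertices) gives at most $m-1$ components, and if there were fewer, some component would have more than $n$ vertices. The core of the argument is to rule this last case out and to show that $R[U]$ is exactly a disjoint union of $m-1$ cliques: once there are precisely $m-1$ components, the blue constraint finishes cliqueness (a blue edge inside one component, together with one vertex from each of the other $m-2$ components, is a blue $K_m$ inside $U$). It then remains to restore $V(\mathcal P)$ and show that each of these cliques absorbs exactly $t-1$ restored vertices into a red $K_{n-1}$, which is $(\star)$.

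I expect the structural step — $R[U]$ is a union of $m-1$ cliques, together with the absorption of $V(\mathcal P)$ — to be the main obstacle. Minimum degree alone does not suffice: for a general tree, e.g.\ the spanning star $K_{1,n-1}$, a graph obtained from a clique somewhat larger than $K_n$ by deleting a bounded-degree subgraph is $T_n$-free yet has minimum degree about $n-c$, and even the Erd\H{o}s--S\'os edge bound does not close this gap; a high red-degree vertex in $U$ likewise obstructs a direct passage to a complete-multipartite blue structure. What should work is to combine the blue-packing constraints (with $n$ large one can always enlarge a blue $K_m$-packing when a would-be red part or the residual set is of the wrong size), stability results of Andr\'asfai--Erd\H{o}s--S\'os / chromatic-threshold type for $K_m$-free graphs of large minimum degree applied to the dense portion of $B[U]$, and the structure of $T_n$-free graphs on at least $n$ vertices coming from the large-tree case of the Erd\H{o}s--S\'os conjecture or from the stability arguments already implicit in the proofs of Theorem~\ref{theo1.3} in \cite{HP1,LP}. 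An alternative route is induction on $t$ with base case Hook--Isaak \cite{HI}, passing from $t$ to $t-1$ by deleting one vertex of the eventual residual part $V_m$; but producing such a vertex — one contained in every blue $(t-1)K_m$ — reduces, via the observation that any two blue $(t-1)K_m$'s overlap in many vertices, to essentially the same structural difficulty.
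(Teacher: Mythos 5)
Your reduction to $(\star)$ and the deduction $(\star)\Rightarrow G\in\mathcal{G}_{n,m,t}$ are sound (a red clique on at least $n-1$ vertices with an outgoing red edge yields a red $T_{n}$, so such cliques are components of $R$, and the count of the leftover vertices is right). Likewise the opening moves of your attack on $(\star)$ are fine: a maximum blue $K_{m}$-packing has exactly $t-1$ members, and the Chv\'atal-type bound gives red degree at least $n-c(m,t)$ inside $U$. But the proof stops exactly at the decisive point. The claim that each component of $R[U]$ is a clique of size $n-1$ (after absorbing packed vertices) does not follow from the minimum-degree information, as your own example shows: for $T_{n}=K_{1,n-1}$ a clique slightly larger than $K_{n}$ minus a bounded-degree graph is $T_{n}$-free with minimum degree about $n-c$, so no purely degree/counting argument can force the clique structure. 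The tools you gesture at do not close this: Andr\'asfai--Erd\H{o}s--S\'os-type stability concerns $K_{m}$-free graphs, not the red side where the difficulty lives; the Erd\H{o}s--S\'os conjecture is not available as a theorem in the generality you would need; and the ``absorption'' of the $(t-1)m$ packed vertices into red copies of $K_{n-1}$ is itself a nontrivial step you leave unargued (in the extremal colorings the packed vertices are distributed among the cliques and the residual part in a specific way that has to be proved, not assumed). Your alternative route, induction on $t$ from the Hook--Isaak base case, also stalls, as you note, on producing a vertex whose deletion destroys all blue $(t-1)K_{m}$'s.

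For comparison, the paper does not try to prove $(\star)$ head-on. It first settles the star case $T_{n}=K_{1,n-1}$ separately (via Hajnal--Szemer\'edi equitable colorings of the red graph plus Hall's theorem), and then runs a double induction on $m$ and $t$ driven by the Burr--Faudree trichotomy: $T_{n}$ contains a long suspended path, or many independent end-edges, or a large talon. In the first two cases an Erd\H{o}s--Faudree--Rousseau--Schelp path-lengthening lemma, respectively Hall's theorem, is used to locate inside $G$ a critical configuration for $T_{n}$ versus $tK_{m-1}$, i.e.\ a member of $\mathcal{G}_{n,m-1,t}$, and a structure lemma (their Lemma 3.1) upgrades this to $G\in\mathcal{G}_{n,m,t}$; in the talon case the star theorem or induction on $t$ (deleting the center of a red $K_{1,n-1}$) applies. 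In other words, the paper exploits the internal structure of the tree to bypass precisely the ``$T_{n}$-free graph of large minimum degree'' obstruction that your sketch runs into, so the missing structural step in your proposal is a genuine gap, not a routine verification.
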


  \noindent Furthermore, we determine the star-critical Ramsey number for $T_{n}$ versus $tK_{m}$.

  \begin{theo}\label{theo1.7}
    Let $n, m$ and $t$ be positive integers. If $n$ is sufficiently large and $m\geq 2$, then $r_{*}(T_{n}, tK_{m})= (n- 1)(m- 2)+ t$.
  \end{theo}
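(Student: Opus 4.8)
The plan is to establish matching lower and upper bounds. Write $N=r(T_{n},tK_{m})=(n-1)(m-1)+t$ by Theorem \ref{theo1.3}, and set $k_{0}=(n-1)(m-2)+t$, so that $N-1-k_{0}=n-2$ and $N-1-(k_{0}-1)=n-1$. Since $K_{N}-K_{1,s}$ is a subgraph of $K_{N}-K_{1,s'}$ whenever $s\geq s'$, the quantity $r_{*}$ is a genuine threshold, so it suffices to prove that $K_{N}-K_{1,n-1}\not\rightarrow(T_{n},tK_{m})$ (which yields $r_{*}\geq k_{0}$) and that $K_{N}-K_{1,n-2}\rightarrow(T_{n},tK_{m})$ (which yields $r_{*}\leq k_{0}$).

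For the lower bound I would exhibit an explicit good colouring of $K_{N}-K_{1,n-1}$. Let $v$ be the vertex of degree $k_{0}-1$, partition the remaining $N-1$ vertices into sets $V_{1},\dots,V_{m-1}$ of size $n-1$ and $V_{m}$ of size $t-1$, let $v$ be adjacent exactly to $V_{1}\cup\dots\cup V_{m-2}\cup V_{m}$ (which has $(n-1)(m-2)+(t-1)=k_{0}-1$ vertices), colour red every edge lying inside a common class $V_{i}$, colour blue every edge between two distinct classes, and colour blue every edge at $v$. There is no red $T_{n}$, since every red component lies inside some $V_{i}$ and hence has fewer than $n$ vertices. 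For the blue side, note that any blue $K_{m}$ — whether or not it uses $v$ — must contain exactly one vertex of $V_{m}$: a blue $K_{m}$ avoiding $v$ consists of one vertex from each of the $m$ classes, while a blue $K_{m}$ through $v$ consists of $v$ together with one vertex from each of the $m-1$ classes $V_{1},\dots,V_{m-2},V_{m}$ that meet the neighbourhood of $v$. Hence $t$ vertex-disjoint blue copies of $K_{m}$ would require $t$ distinct vertices of $V_{m}$, which is impossible as $|V_{m}|=t-1$.

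For the upper bound I would argue by contradiction. Suppose $c$ is a good colouring of $G=K_{N}-K_{1,n-2}$ with special vertex $v$. Then $G-v=K_{N-1}$ carries a colouring with no red $T_{n}$ and no blue $tK_{m}$, so by Theorem \ref{theo1.6} it belongs to $\mathcal{G}_{n,m,t}$: there are red cliques $V_{1},\dots,V_{m-1}$ of size $n-1$ and a set $V_{m}$ of size $t-1$, with every cross-class edge blue. Since $v$ misses only $n-2<n-1$ vertices, it has a neighbour in each $V_{i}$ with $i\in[m-1]$, and every such edge $vw$ with $w\in V_{i}$ must be blue: otherwise the red $K_{n-1}$ on $V_{i}$ together with the pendant red edge $vw$ would contain a red $T_{n}$, because $K_{n-1}$ contains any tree on $n-1$ vertices with a prescribed vertex playing the role of the neighbour of a chosen leaf of $T_{n}$, and that neighbour can be placed at $w$ while the leaf is sent to $v$. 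Thus $v$ has a blue neighbour $x_{i}\in V_{i}$ for every $i\in[m-1]$, and $\{v,x_{1},\dots,x_{m-1}\}$ is a blue $K_{m}$. To produce a blue $tK_{m}$, keep this copy and, for $j=1,\dots,t-1$, take $\{y_{1}^{(j)},\dots,y_{m-1}^{(j)},z_{j}\}$, where $z_{j}$ is the $j$-th vertex of $V_{m}$ and $y_{i}^{(j)}\in V_{i}$ is chosen distinct from $x_{i}$ and from all previously used vertices of $V_{i}$, which is possible since $|V_{i}|=n-1\geq t$ for $n$ large. These $t$ blue copies of $K_{m}$ are pairwise disjoint, contradicting that $c$ has no blue $tK_{m}$; hence no good colouring exists.

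Combining the two bounds gives $r_{*}(T_{n},tK_{m})=k_{0}=(n-1)(m-2)+t$. The crux of the argument is the upper bound, and essentially all of its weight rests on Theorem \ref{theo1.6}: once the critical colouring of $K_{N-1}$ is forced into $\mathcal{G}_{n,m,t}$, the remaining work is a short case analysis. The two points that need genuine care are (i) the observation that a red $K_{n-1}$ with a single pendant red edge contains every tree on $n$ vertices, which relies on the freedom to prescribe the image of the attachment vertex when embedding a tree into a complete graph, and (ii) checking that $n-1\geq t$ leaves enough room inside each $V_{i}$ to select the $t$ disjoint blue cliques; both hold for $n$ sufficiently large. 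Everything else reduces to the pigeonhole count on $V_{m}$.
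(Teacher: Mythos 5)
Your proposal is correct and follows essentially the same route as the paper: the lower bound uses the same construction (a critical graph from $\mathcal{G}_{n,m,t}$ plus a vertex of degree $(n-1)(m-2)+t-1$ joined in blue to all but one red $K_{n-1}$), and the upper bound invokes Theorem \ref{theo1.6} on $G-v$ and then finds a blue $K_m$ through $v$ together with $t-1$ disjoint blue copies using the $t-1$ vertices of $V_m$, exactly as in the paper. The only difference is that you spell out the verification of the lower-bound colouring in more detail than the paper does, which is fine.
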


  \begin{remark}
    Let $t$ be a positive integer. If $n\geq t$, then $r_{*}(T_{n}, tK_{1})= 0$.
  \end{remark}

  \textbf{Notations and definitions}:  Let $G$ be a graph. For a vertex $u\in V(G)$, let $N_{G}(u)$ be the neighbors of $u$ in $G$ and let $d_{G}(u)= |N_{G}(u)|$ be the degree of $u$ in $G$. Let $\Delta(G)$ be the maximum degree of $G$. For vertex sets $U,W\subset V(G)$ such that $U\cap W= \emptyset$, let $E_{G}(U, W)$ be the subset of $E(G)$ such that one end vertex belongs to $U$ and another belongs to $W$. Moreover, let $G- U$ be the graph obtained from $G$ by removing $U$ and all edges incident to $U$, and let $G[U]$ be the graph induced by $G$ on $U$. Let $K_{n, m}$ be the complete bipartite graph with partite sizes $n$ and $m$, respectively. For a real number $x$, let $\lfloor x\rfloor$ be the maximum integer not exceeding $x$. A vertex set $U\subset V(G)$ is an {\it independent set} if $E(G[U])= \emptyset$.

\section{For a large star $K_{1, n- 1}$}
  Firstly, we need the following important structure result, which is called the Hajnal-Szemer\'edi's Theorem.

  \begin{theo}[\rm Hajnal and Szemer\'edi \cite{HS}, Kierstead and Kostachka \cite{KK}]\label{theo2.1}
    Let $n, \ell, a$ and $b$ be positive integers such that $n= a\ell+ b$, where $0\leq b< \ell$. Let $G$ be a graph of $n$ vertices with $\Delta(G)< \ell$. Then there is a partition $A_{1}, \dots, A_{\ell}$ of $V(G)$ such that $A_{i}$ is an independent set for each $i\in [\ell]$. Furthermore, $|A_{i}|= a+ 1$ for each $i\in [b]$ and $|A_{i}|= a$ for each $i\in [\ell]\backslash [b]$.
  \end{theo}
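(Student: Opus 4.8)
\medskip
\noindent\emph{Proof proposal.} Theorem~\ref{theo2.1} is the assertion that every graph $G$ with $\Delta(G)<\ell$ admits an \emph{equitable $\ell$-colouring}, i.e.\ a proper colouring whose $\ell$ colour classes have sizes as equal as possible (necessarily $b$ classes of size $a+1$ and $\ell-b$ classes of size $a$). I would prove this by the Kierstead--Kostochka streamlining of the Hajnal--Szemer\'edi argument, running a strong induction on $n=v(G)$. The base case $n\le\ell$ is immediate, since then one only needs a proper colouring in which each class is a singleton or empty, which any injection of $V(G)$ into the $\ell$ colours provides. For the inductive step, pick a vertex $v$, apply the inductive hypothesis to $G-v$ to obtain an equitable $\ell$-colouring of it, and then attempt to re-insert $v$; to fix ideas I describe the case $\ell\mid n$, so that the colouring of $G-v$ has $\ell-1$ full classes of size $n/\ell$ and one deficient class $W$ with one vertex fewer (the general case is entirely analogous, with $W$ replaced by any one of the smaller classes). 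If $v$ has a non-neighbour in $W$ we colour $v$ with that colour and are done.

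\noindent The content of the argument lies in the case $W\subseteq N_G(v)$. Then $v$ uses $|W|$ of its fewer than $\ell$ neighbours inside $W$, so at least one full class, say $V_1$, contains no neighbour of $v$; put $v$ into $V_1$, which now has one vertex too many. Call a vertex $w$ of a class $U$ \emph{movable to a class $U'\neq U$} if $N_G(w)\cap U'=\emptyset$, and build an auxiliary digraph on the colour classes with an arc $U\to U'$ whenever $U$ contains a vertex, other than $v$, that is movable to $U'$ --- with the understanding that one may perform such single-vertex transfers one at a time, always maintaining exactly one oversized class, and re-examine the digraph after each transfer. A directed walk from $V_1$ to $W$ in this dynamic sense then produces a finite sequence of transfers ending by placing a vertex into $W$, after which every class has size $n/\ell$ and we have the desired equitable $\ell$-colouring of $G$.

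\noindent Everything up to this point is routine bookkeeping; the genuine obstacle is to prove that such a walk to $W$ always exists. If it did not, let $\mathcal A$ be the set of classes reachable from $V_1$ (so $W\notin\mathcal A$) and $\mathcal B$ its complement; unreachability forces, roughly speaking, every vertex of every $\mathcal A$-class to have a neighbour in every $\mathcal B$-class, and a careful count of the edges of $G$ between $\bigcup\mathcal A$ and $\bigcup\mathcal B$ --- using the sizes of the classes and the fact that $v$ itself has few neighbours --- ultimately exhibits a vertex of degree at least $\ell$, contradicting $\Delta(G)<\ell$. Making this count actually close is the delicate heart of the Hajnal--Szemer\'edi theorem: one must first perform partial reshuffles within $\bigcup\mathcal A$ to enlarge the set of movable vertices, distinguishing the vertices of each $\mathcal A$-class according to whether they are the unique neighbour of $v$ in that class (the ``solo'' vertices), and run a secondary induction on the number of solo vertices. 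I expect this nested exchange argument to be where essentially all the difficulty lies; by contrast the base case, the reduction from the general $(a,b)$ case to the case $\ell\mid n$ (e.g.\ by adjoining a disjoint $K_{\ell-b}$, whose vertices necessarily receive distinct colours and hence fall into distinct classes), and the arithmetic of class sizes throughout are all straightforward.
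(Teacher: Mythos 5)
This statement is not proved in the paper at all: it is the Hajnal--Szemer\'edi equitable-colouring theorem, quoted with references to Hajnal--Szemer\'edi and to the Kierstead--Kostochka short proof, and the authors use it as a black box in Section~2. So the relevant comparison is between your sketch and the cited Kierstead--Kostochka argument, whose overall shape you have reproduced correctly: induction on $n$, the reduction to $\ell\mid n$ by adjoining a disjoint $K_{\ell-b}$ (this reduction is fine, since the added clique keeps the maximum degree below $\ell$ and its vertices land in distinct classes), the reinsertion of $v$ into a full class with no neighbour of $v$ (which exists because $v$ has at most $\ell-1-(n/\ell-1)\le \ell-2$ neighbours outside the deficient class $W$ when $n>\ell$), and the attempt to discharge the oversized class to $W$ by a sequence of single-vertex moves.

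The genuine gap is exactly where you say you expect the difficulty to lie: the claim that if $W$ is not reachable by such transfers then some vertex has degree at least $\ell$. As written, your proposal only asserts that ``a careful count \dots ultimately exhibits a vertex of degree at least $\ell$'' and that one should distinguish solo vertices and run a secondary induction; none of this is carried out, and the naive count does not close without it. Unreachability only gives that every vertex of an $\mathcal A$-class has at least one neighbour in every $\mathcal B$-class, which yields roughly $|\mathcal B|$ forced neighbours per vertex, not $\ell$; the Kierstead--Kostochka proof needs the finer analysis of solo vertices, their ``witness'' non-neighbours, and internal reshuffles of $\bigcup\mathcal A$ precisely to upgrade this to a degree contradiction (or, in their formulation, a weight/discharging bound on the edges between $\bigcup\mathcal A$ and $\bigcup\mathcal B$). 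Since that exchange-and-counting argument is the entire content of the theorem, the proposal is an accurate road map but not a proof; to be complete it would have to state and prove the reachability lemma, or else simply cite \cite{HS,KK} as the paper does.
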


  \noindent We also need a well-known Theorem, which is called the Hall's Theorem.

  \begin{theo}[{\rm Hall \cite{H}}]\label{theo2.2}
    Let $a$ and $b$ be positive integers such that $a\leq b$. Let $G$ be a red-blue edge-colored $K_{a, b}$ with partite sets $X$ and $Y$ such that $|X|= a$ and $|Y|= b$. Then $G$ contains a red copy of matching saturating $X$ or a blue copy of $K_{c+ 1, b- c}$ with $c+ 1$ vertices in $X$, where $0\leq c\leq a- 1$.
  \end{theo}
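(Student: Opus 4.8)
The plan is to deduce this from the classical (defect form of the) Hall marriage theorem applied to the red part of $G$. Let $R$ be the spanning subgraph of $G$ whose edge set consists of exactly the red edges of $G$; then $R$ is a bipartite graph with parts $X$ and $Y$. If $R$ contains a matching saturating $X$, then this matching is a red matching saturating $X$ and the first alternative holds. So assume for the rest of the argument that $R$ has no matching saturating $X$.

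By the contrapositive of Hall's theorem, there is a set $S\subseteq X$ with $|N_R(S)|<|S|$, where $N_R(S)$ is the set of vertices of $Y$ joined to some vertex of $S$ by a red edge; note $S\neq\emptyset$, since otherwise $|N_R(S)|=0=|S|$. Set $c:=|N_R(S)|$. Because $c<|S|\le|X|=a$ we get $0\le c\le a-1$, which is exactly the range of $c$ claimed in the statement; moreover, since $a\le b$, we also have $b-c\ge b-(a-1)\ge 1$.

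Now pick any subset $S'\subseteq S$ with $|S'|=c+1$, which is possible because $|S|\ge c+1$. Every red edge incident to a vertex of $S'$ has its other endpoint in $N_R(S)$, a set of exactly $c$ vertices of $Y$. Consequently every edge between $S'$ and $Y\setminus N_R(S)$ is blue. Since $|S'|=c+1$ and $|Y\setminus N_R(S)|=b-c$, the pair $(S',\,Y\setminus N_R(S))$ induces a blue complete bipartite graph $K_{c+1,\,b-c}$ whose side of size $c+1$ lies in $X$, giving the second alternative.

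I do not expect a genuine obstacle here: the content is entirely the defect form of Hall's theorem, and the only things to verify carefully are bookkeeping points, namely that the deficient set $S$ can be taken nonempty (so that $c\ge 0$ and an $S'$ of size $c+1$ exists), that $c\le a-1$ (immediate from $c<|S|\le a$), and that $b-c\ge 1$ (which is where the hypothesis $a\le b$ is used, ensuring the $Y$-side of the blue biclique is nonempty).
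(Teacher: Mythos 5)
Your proof is correct: the paper states this result as a citation of Hall's theorem and gives no proof of its own, and your derivation via the contrapositive (defect) form of Hall's theorem applied to the red subgraph is exactly the standard way the colored reformulation follows. All the bookkeeping points you flag (nonemptiness of the deficient set $S$, $c\le a-1$, and $b-c\ge 1$ from $a\le b$) are handled correctly, so there is nothing to add.
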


  \noindent Now, we are ready to prove that Theorem \ref{theo1.6} holds for a large star.

  \begin{theo}\label{lemma2.3}
    Let $n, m$ and $t$ be positive integers, and let $G$ be a Ramsey critical graph for $K_{1, n- 1}$ versus $tK_{m}$. If $n$ is sufficiently large, then $G\in \mathcal{G}_{n, m, t}$.
  \end{theo}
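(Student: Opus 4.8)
The plan is to reduce the statement to a structural claim about the red graph and then pin that structure down with the Hajnal--Szemer\'edi theorem, a component analysis, and a Tur\'an-type extremal argument. Since $K_{1,n-1}=T_n$ is a tree, Theorem~\ref{theo1.3} gives $N:=r(K_{1,n-1},tK_m)=(n-1)(m-1)+t$ for $n$ large (the case $m=1$ is immediate, so assume $m\ge 2$). Thus $G$ is a red/blue coloring of $K_{N-1}$ with $N-1=(n-1)(m-1)+(t-1)$ whose red graph $R$ satisfies $\Delta(R)\le n-2$ (no red $K_{1,n-1}$) and whose blue graph $B$ contains no $tK_m$. It suffices to find a set $W$ with $|W|=t-1$ admitting no red edge between $W$ and $V(G)\setminus W$ and with $R-W$ a disjoint union of $m-1$ copies of $K_{n-1}$; this is precisely the assertion $G\in\mathcal G_{n,m,t}$.

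Next I would isolate the $t-1$ unavoidable blue copies of $K_m$. Applying Theorem~\ref{theo2.1} to $R$ (legitimate since $\Delta(R)\le n-2<n-1$, and $N-1=(n-1)(m-1)+(t-1)$ with $t-1<n-1$) partitions $V(G)$ into independent sets of $R$, i.e.\ cliques of $B$, of which exactly $t-1$ have size $m$ and $n-t$ have size $m-1$. The $t-1$ size-$m$ classes are disjoint blue copies of $K_m$; let $Y$ be their union, $X=V(G)\setminus Y$, so $|X|=(n-t)(m-1)$. Since $B$ has no $tK_m$, the graph $B[X]$ has no $K_m$, i.e.\ $\alpha(R[X])\le m-1$; moreover every $v\in X$ has a red neighbor in each size-$(m-1)$ class other than its own, for otherwise $\{v\}$ together with that class is a blue $K_m$ inside $X$, producing a blue $tK_m$. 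In particular $d_{R[X]}(v)\ge n-t-1$.

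The structural core is a component analysis of $R$. If $R$ had $m$ components each of order at least $t$, then choosing transversals of $m$ of them over $t$ successive rounds would build a blue $tK_m$; hence $R$ has at most $m-1$ components of order $\ge t$. If the remaining (``small'') components together spanned too many vertices --- or if there were at least $m$ of them with large union --- then, since $B$ contains the complete multipartite graph on these components, an elementary $K_m$-packing estimate would again yield a blue $tK_m$; so the small components span at most a constant number $c_0=c_0(m,t)$ of vertices. Thus the at most $m-1$ ``big'' components carry all but $O(1)$ of the $(n-1)(m-1)+(t-1)$ vertices. Inside a big component $C$, Theorem~\ref{theo2.1} provides an equitable partition into $n-1$ independent sets; a class of size $\ge m$ in some $C$, completed when needed by the spare vertices, again gives a blue $tK_m$, so $\lfloor|C|/(n-1)\rfloor\le m-1$ for each big $C$, and as $\sum_C\lfloor|C|/(n-1)\rfloor\le\lfloor(N-1)/(n-1)\rfloor=m-1$, a counting argument forces exactly $m-1$ big components with orders near $n-1$. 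Finally each big $C$ must be complete: if $\alpha(C)\ge m$ one extracts more than $t-1$ disjoint independent sets of sizes $m$ and $m-1$ (completing the latter with the spare vertices), while if $2\le\alpha(C)\le m-1$ then the equitable partition of $C$ and the equality case of Tur\'an's theorem force $C$ to be a disjoint union of cliques, contradicting its connectedness --- this last step is the $t=1$ argument of Hook and Isaak~\cite{HI}. Hence each big $C$ equals $K_{n-1}$, there are $m-1$ of them, and the small components span exactly $t-1$ vertices; these form the desired $W$, which has no red edge to the cliques because it is a union of separate components. Therefore $G\in\mathcal G_{n,m,t}$.

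I expect the hard part to be the last two moves of the previous paragraph: forcing the total ``excess'' (the vertices in small components plus the overshoot of each big component past $n-1$) to be exactly $t-1$, and upgrading $\alpha(C)\le m-1$ to $\alpha(C)=1$ for every big $C$. For $t=1$, Hook and Isaak~\cite{HI} can force the red graph to be $(n-2)$-regular and then invoke the uniqueness in Tur\'an's theorem directly; here the $t-1$ ``absorbing'' vertices destroy this regularity, so instead one needs an exchange argument: given any irregularity --- a red edge from $W$ to a clique, a missing edge inside a clique, or a vertex of $X$ carrying an extra red edge --- one recolors the current family of $t-1$ blue copies of $K_m$ so as to free a vertex and assemble a $t$-th one. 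Theorem~\ref{theo2.2} (Hall's theorem in the colored bipartite form), applied to the bipartite red graph between one of these blue $K_m$'s and an independent-set class of the Hajnal--Szemer\'edi partition, is the natural tool for making such exchanges, and iterating them should remove every irregularity and leave exactly the structure of $\mathcal G_{n,m,t}$.
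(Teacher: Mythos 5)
Your opening reductions are sound and in fact coincide with how the paper begins: compute $r(K_{1,n-1},tK_m)=(n-1)(m-1)+t$, note $\Delta(R)\le n-2$, apply Theorem~\ref{theo2.1} to get $n-1$ blue cliques of which $t-1$ have size $m$, observe that the rest of the blue graph is $K_m$-free and that every vertex has a red neighbour in every other $(m-1)$-class. The gap comes exactly at the structural core. Your claim that ``a counting argument forces exactly $m-1$ big components with orders near $n-1$'' does not follow from the two facts you actually derive (at most $m-1$ red components of order $\ge t$, and $\sum_C\lfloor|C|/(n-1)\rfloor\le\lfloor(N-1)/(n-1)\rfloor=m-1$). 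For instance, with $m=3$, $t=2$ nothing in your argument excludes a single red component of order about $2(n-1)$: it satisfies both inequalities, and its own equitable partition produces only $t-1$ classes of size $m$, so no blue $tK_m$ is exhibited at this stage. Ruling out such ``merged'' components (equivalently, showing the red graph really splits into $m-1$ pieces of size exactly $n-1$ plus $t-1$ universally blue vertices) is precisely where the difficulty of the theorem lives, and your outline supplies no mechanism for it.

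The same applies to the finish: upgrading $\alpha(C)\le m-1$ to ``each big component is a complete $K_{n-1}$, with exactly $t-1$ leftover vertices joined in blue to everything'' is only described as an exchange argument that ``should remove every irregularity''; you yourself point out that the Hook--Isaak regularity/Tur\'an-equality route breaks down because of the $t-1$ absorbing vertices, and no concrete replacement is given. So this is a plan with the two decisive steps missing rather than a proof. For comparison, the paper never looks at red components: after the same Hajnal--Szemer\'edi start it uses Hall's theorem to place red perfect matchings between the $(m-1)$-classes, then a counting argument to find, inside the $t-1$ size-$m$ classes, $t-1$ vertices each completely blue to a distinct $(m-1)$-class, then maximum-degree counting to show these $t-1$ vertices are blue to everything and that red matchings run between \emph{all} classes, and finally that following these matchings from a fixed class assembles the $m-1$ red copies of $K_{n-1}$. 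Some argument of that precision (or a worked-out version of your exchange idea) is needed before the statement can be considered proved.
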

  \begin{proof}
    The assertion is trivial for $m= 1$ and the assertion holds for $t= 1$ from \cite{HI}. Thus, we may assume that $m\geq 2$ and $t\geq 2$. Let $G$ be a Ramsey critical graph for $K_{1,n- 1}$ versus $tK_{m}$ and let $R$ be the graph induced by the red edges of $G$. Then $R$ contains no copy of $K_{1, n- 1}$ since $G\not\rightarrow (K_{1, n- 1}, tK_{m})$, and thus $\Delta(R)< n- 1$. Note that $v(R)= v(G)= (n- 1)(m- 1)+ t- 1$. By Theorem \ref{theo2.1}, there is a partition $A_{1}, \dots, A_{n- 1}$ of $V(R)$ such that $A_{i}$ is an independent set in $R$ for each $i\in [n- 1]$. Furthermore, $|A_{i}|= m- 1$ for each $i\in [n- t]$ and $|A_{i}|= m$ for each $i\in [n- 1]\backslash [n- t]$. Consequently, $G[A_{i}]$ is a blue copy of $K_{m- 1}$ for each $i\in [n- t]$, and $G[A_{i}]$ is a blue copy of $K_{m}$ for each $i\in [n- 1]\backslash [n- t]$.

    \begin{claim}\label{claim2.4}
      There is a red copy of $(m- 1)K_{2}$ between $A_{i}$ and $A_{j}$ for each $i, j\in [n- t]$ and $i\neq j$. Furthermore, if there is a subset $D\subset [n- 1]$ of size $t- 1$ such that $|A_{k}|= m$ for each $k\in D$ and $|A_{k}|= m- 1$ for each $k\in [n-1]\backslash D$, then there is a red copy of $(m- 1)K_{2}$ between $A_{i'}$ and $A_{j'}$ for each $i', j'\in [n- 1]\backslash D$ and $i'\neq j'$.
    \end{claim}
    \begin{proof}
      Note that $E_{G}(A_{i}, A_{j})$ is a red-blue edge-colored $K_{m- 1, m- 1}$ and denote it by $H$. By Theorem \ref{theo2.2}, $H$ contains a red copy of matching saturating $A_{i}$ or a blue copy of $K_{c+ 1, m- 1- c}$ with $c+ 1$ vertices in $A_{i}$, where $0\leq c\leq m- 2$. If the latter holds, then $G[V(K_{c+ 1, m- 1- c})]$ is a blue copy of $K_{m}$ since $G[A_{i}]$ and $G[A_{j}]$ are blue copies of $K_{m}$. Together with $G[A_{n- t+ 1}], \dots, G[A_{n- 1}]$, it forms a blue copy of $tK_{m}$. It is a contradiction since $G\not\rightarrow (K_{1, n- 1}, tK_{m})$. Consequently, $H$ contains a red copy of matching saturating $A_{i}$, which is a red copy of $(m- 1)K_{2}$ since $|A_{i}|= |A_{j}|= m- 1$.\q
    \end{proof}

    \begin{claim}\label{claim2.5}
      For each $i\in [t- 1]$, there is a vertex $u_{n- i}\in A_{n- i}$ satisfying the following. There is an integer $s_{i}\in [n- t]$ such that $E_{G}(\{u_{i}\}, A_{s_{i}})$ is blue. Furthermore, $s_{i}$ are different.
    \end{claim}
    \begin{proof}
      We will find the vertex $u_{n- i}$ and $s_{i}$ step by step. Assume that there are vertices $u_{n- i}\in A_{n- i}$ such that $E_{G}(\{u_{n- i}\}, A_{i})$ is blue for each $i\in [k]$, where $k\in \{0\}\cup [t- 2]$. If there is a vertex $u\in \bigcup_{j= n- t+ 1}^{n- k- 1} A_{j}$ such that $E_{G}(\{u\}, A_{i'})$ for some $i'\in [n- t]\backslash [k]$. Then relabel the index of $A_{k+ 1}, \dots, A_{n- k- 1}$ such that $u\in A_{n- k- 1}$ and $i'= k+ 1$, and we are done.

      Otherwise, for each vertex $u\in \bigcup_{j= n- t+ 1}^{n- k- 1} A_{j}$, there is at least one vertex $v_{j}^{u}\in A_{j}$ such that $uv_{j}^{u}$ is red for each $j\in [n- t]\backslash [k]$. Thus, $\left|N_{R}(u)\cap \left(\bigcup_{j= k+ 1}^{n- t} A_{j}\right)\right|\geq n- t- k$ for each vertex $u\in \bigcup_{j= n- t+ 1}^{n- k- 1} A_{j}$. Let $w\in \bigcup_{j= k+ 1}^{n- t} A_{j}$ be a vertex. Note that there is a red copy of $(m- 1)K_{2}$ between $A_{j}$ and $A_{j'}$ for each $j, j'\in [n- t]$ and $j\neq j'$ by Claim \ref{claim2.4}. Thus, $\left|N_{R}(w)\cap \left(\bigcup_{j= 1}^{n- t} A_{j}\right)\right|\geq n- t- 1$. Recall that $G[\{u_{n- j}\}\cup A_{j}]$ is a blue copy of $K_{m}$ for each $j\in [k]$ by our assumption. Thus, there is a red copy of $(m- 1)K_{2}$ between $A_{j}$ and $A_{n- j'}\backslash \{u_{n- j'}\}$ for each $j\in [n- t]\backslash [k]$ and $j'\in [k]$ by Claim \ref{claim2.4}. Thus, $\left|N_{R}(w)\cap \left(\bigcup_{j= n- k}^{n- 1} A_{j}\right)\right|\geq k$. Note that $|N_{R}(w)|\leq n- 2$ since $\Delta(R)< n- 1$. Consequently, $\left|N_{R}(w)\cap \left(\bigcup_{j= n- t+ 1}^{n- k- 1} A_{j}\right)\right|\leq t- k- 1$. Then
      $$\begin{aligned}
        & m(t- k- 1)(n- t- k)\\
        = & \left|\bigcup_{j= n- t+ 1}^{n- k- 1} A_{j}\right|(n- t- k)\leq \sum_{u\in \bigcup_{j= n- t+ 1}^{n- k- 1} A_{j}} \left|N_{R}(u)\cap \left(\bigcup_{j= k+ 1}^{n- t} A_{j}\right)\right|\\
        = & \left|E_{R}\left(\left(\bigcup_{j= k+ 1}^{n- t} A_{j}\right), \left(\bigcup_{j= n- t+ 1}^{n- k- 1} A_{j}\right)\right)\right|\\
        = & \sum_{w\in \bigcup_{j= k+ 1}^{n- t} A_{j}} \left|N_{R}(w)\cap \left(\bigcup_{j=n- t+ 1}^{n- k- 1} A_{j}\right)\right|\leq \left|\bigcup_{j= k+ 1}^{n- t} A_{j}\right|(t- k- 1)\\
        = & (m- 1)(n- t- k)(t- k- 1).
      \end{aligned}$$
      It is a contradiction, since $m\geq 2, t\geq 2$ and $k\in \{0\}\cup [t- 2]$.\q
    \end{proof}

    Let $A_{n- i}= A_{n- i}\backslash \{u_{n- i}\}$ for each $i\in [t- 1]$ in the following for convenience. Furthermore, we may assume that $E_{G}(\{u_{n- i}\}, A_{i}\cup A_{n- i})$ is blue for each $i\in [t- 1]$ by Claim \ref{claim2.5}.

    \begin{claim}\label{claim2.6}
      There is a red copy of $(m- 1)K_{2}$ between $A_{k}$ and $A_{k'}$ for each $k, k'\in [n- 1]$ and $k\neq k'$. Moreover, $E_{G}\left(U, \bigcup_{j= 1}^{n- 1} A_{j}\right)$ is blue, where $U= \{u_{n- i}, i\in [t- 1]\}$.
    \end{claim}
    \begin{proof}
      Note that $G[\{u_{n- i}\}\cup A_{n- i}]$ is a blue copy of $K_{m}$ for each $i\in [t- 1]$. Thus, there is a red copy of $(m- 1)K_{2}$ between $A_{k}$ and $A_{k'}$ for each $k, k'\in [n- t]$ and $k\neq k'$ by Claim \ref{claim2.4}. Note that $G[\{u_{n- i}\}\cup A_{i}]$ is a blue copy of $K_{m}$ for each $i\in [t- 1]$. Thus, there is a red copy of $(m- 1)K_{2}$ between $A_{k}$ and $A_{k'}$ for each $k, k'\in [n- 1]\backslash [t- 1]$ and $k\neq k'$ by Claim \ref{claim2.4}. Thus, $\left|N_{R}(w)\cap \bigcup_{j= 1}^{n- 1} A_{j}\right|\geq n- 2$ for each vertex $w\in \bigcup_{j= t}^{n- t} A_{j}$. Consequently, $E_{G}\left(U, \bigcup_{j= t}^{n- t} A_{j}\right)$ is blue since $\Delta(R)< n- 1$. Note that $n\geq 3t- 3$ since $n$ is sufficiently large and $G[\{u_{n- i}\}\cup A_{t- 1+ i}]$ is a blue copy of $K_{m}$ for each $i\in [t- 1]$. Thus, there is a red copy of $(m- 1)K_{2}$ between $A_{k}$ and $A_{k'}$ for each $k, k'\in ([n- 1]\backslash [2t- 2])\cup [t- 1]$ and $k\neq k'$ by Claim \ref{claim2.4}. Consequently, there is a red copy of $(m- 1)K_{2}$ between $A_{k}$ and $A_{k'}$ for each $k, k'\in [n- 1]$ and $k\neq k'$. Note that $\left|N_{R}(w)\cap \bigcup_{j= 1}^{n- 1} A_{j}\right|\geq n- 2$ for each $w\in \bigcup_{j= 1}^{n- 1} A_{j}$. Consequently, $E_{G}\left(U, \bigcup_{j= 1}^{n- 1} A_{j}\right)$ is blue since $\Delta(R)< n- 1$.\q
    \end{proof}

    For each $i\in [n- 1]$, $|N_{R}(u)\cap A_{j}|= 1$ for each vertex $u\in A_{i}$ and each $j\in [n- 1]\backslash \{i\}$ by Claim \ref{claim2.6} since $\Delta(R)< n- 1$. For each $s\in [m- 1]$, let $v_{1}^{s}\in A_{1}$ and let $\{v_{j}^{s}\}= N_{R}(v_{1}^{s})\cap A_{j}$ for each $j\in [n- 1]\backslash [1]$.

    \begin{claim}\label{claim2.7}
      $G[\{v_{i}^{s}, i\in [n- 1]\}]$ is a red copy of $K_{n- 1}$ for each $s\in [m- 1]$.
    \end{claim}
    \begin{proof}
      Otherwise, there are $i', j'\in [n- 1]\backslash [1]$ such that $i'\neq j'$ and $v^{s}_{i'}v^{s}_{j'}$ is blue. Recall that $N_{R}(v^{s}_{i'})\cap A_{1}= N_{R}(v^{s}_{j'})\cap A_{1}= \{v_{1}^{s}\}$. Consequently, $G[\{v^{s}_{i'}, v^{s}_{j'}\}\cup (A_{1}\backslash\{v^{s}_{1}\})]$ is a blue copy of $K_{m}$ since $G[A_{1}]$ is a blue copy of $K_{m}$. Moreover, $G$ contains a blue copy of $tK_{m}$ since $E_{G}\left(U, \bigcup_{j= 1}^{n- 1} A_{j}\right)$ is blue by Claim \ref{claim2.6} and $n$ is sufficiently large. It is a contradiction since $G\not\rightarrow (K_{1, n- 1}, tK_{m})$.\q
    \end{proof}

    Consequently, $G\in\mathcal{G}_{n, m, t}$ by Claim \ref{claim2.6} and Claim \ref{claim2.7} since $\Delta(R)< n- 1$.\q
  \end{proof}

\section{For a large tree $T_{n}$}
  Before we prove our main theorem, we need a structure lemma as follows.
  \begin{lemma}\label{lemma3.1}
    Let $G$ be a Ramsey critical graph for $T_{n}$ versus $tK_{m}$ and let $H\in \mathcal{G}_{n, m- 1, t}$ be a graph. If $G$ contains a copy of $H$ and $n$ is sufficiently large, then $G\in \mathcal{G}_{n, m, t}$.
  \end{lemma}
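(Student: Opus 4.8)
The plan is to separate $G$ into the given copy of $H$ and the $n-1$ leftover vertices, show that the red part of $H$ forces the red graph of $G$ to split off $H$'s red cliques, and thereby reduce the whole statement to the case $m=2$, which I would then settle by induction on $t$.

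Write $W:=V(G)\setminus V(H)$; a vertex count gives $|W|=v(G)-v(H)=n-1$. Let $U_1,\dots,U_{m-2}$ be the red $K_{n-1}$'s of $H$ and $U_{m-1}$ its $(t-1)$-vertex part. First I would show that every edge of $G$ between $W$ and $\bigcup_{j\le m-2}U_j$ is blue: if $wv$ were red with $w\in W$ and $v\in U_j$, pick a leaf $\ell$ of $T_n$ with neighbour $p$, embed $T_n-\ell\cong T_{n-1}$ into the red clique $U_j$ by a bijection sending $p$ to $v$, and extend by $\ell\mapsto w$; all edges of $T_n$ are then red, contradicting $G\not\to(T_n,tK_m)$. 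Since $H$ also colours all edges between distinct $U_i$ blue, the red graph of $G$ has no edge between distinct $U_i$ and none between any $U_j$ ($j\le m-2$) and $W\cup U_{m-1}$; in particular every blue $K_m$ of $G$ must contain two (blue-adjacent) vertices of $W\cup U_{m-1}$.

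Consequently $W\cup U_{m-1}$ contains no blue matching of size $t$: from such a matching together with $t$ pairwise disjoint transversals of $U_1,\dots,U_{m-2}$ (available since $|U_j|=n-1\ge t$) one would assemble a blue $tK_m$. Hence $C:=G[W\cup U_{m-1}]$ is a $2$-colouring of $K_{n+t-2}$ with no red $T_n$ and no blue $tK_2$; as $r(T_n,tK_2)=n+t-1$ by Theorem~\ref{theo1.3}, $C$ is a Ramsey critical graph for $T_n$ versus $tK_2$. It therefore suffices to prove the $m=2$ statement: such a $C$ lies in $\mathcal{G}_{n,2,t}$, i.e.\ $C$ has a red $K_{n-1}$ on a vertex set $I$ all of whose edges to the other $t-1$ vertices are blue. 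Given this, the red cliques $U_1,\dots,U_{m-2},I$ together with the free part $(W\cup U_{m-1})\setminus I$ (all the needed cross-edges being blue by the previous paragraph and by the definition of $I$) exhibit $G$ as a member of $\mathcal{G}_{n,m,t}$.

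I would prove the $m=2$ statement by induction on $t$. For $t=1$ the blue graph of $C$ is empty, so $C=K_{n-1}$ in red and $C\in\mathcal{G}_{n,2,1}$. For $t\ge 2$, let $B$ be the blue graph of $C$ and write $\nu(B)$ for the size of a maximum matching. The crux is that $B$ has a vertex lying in every maximum matching. Otherwise, by the Gallai--Edmonds structure theorem $B$ is a vertex-disjoint union of factor-critical graphs, so the red graph of $C$ is a join $K_a\vee H_0$, where $a$ is the number of isolated vertices of $B$ and $|H_0|=:w=n+t-2-a$; since each non-trivial factor-critical piece has at least $3$ vertices and $\nu(B)\le t-1$, one gets $w\le 3t-3$, hence $a\ge n-2t+1$. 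Then the red graph contains $K_a\vee\overline{K_w}$ as a subgraph with $a+w=n+t-2\ge n$ and $w\le 3t-3\le\alpha(T_n)$, so placing an independent $(n-a)$-set of $T_n$ on the $\overline{K_w}$-side and the rest on the clique embeds $T_n$ in red, a contradiction. Now pick such a vertex $v$; then $\nu(B-v)\le t-2$, so $C-v$ is again a critical-type colouring, this time for $T_n$ versus $(t-1)K_2$ on $n+t-3$ vertices, and by induction $C-v\in\mathcal{G}_{n,2,t-1}$, giving a red $K_{n-1}$ on a set $I_0$ that is all-blue to the other $t-2$ vertices. Finally $v$ has no red neighbour in $I_0$ (else the leaf-attachment embedding above, applied inside $I_0$, yields a red $T_n$), so $I:=I_0$ is all-blue to the remaining $t-1$ vertices and $C\in\mathcal{G}_{n,2,t}$.

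The first two paragraphs are essentially bookkeeping once one observes that the red part of $H$ peels off $U_1,\dots,U_{m-2}$ from the red graph. The real work is the $m=2$ statement, and within it the assertion that the blue graph of $C$ always has a vertex in every maximum matching — this is precisely what makes the induction on $t$ go through, and it is where I expect the main difficulty to lie. It rests on combining the Gallai--Edmonds description of graphs with no such vertex with the embedding lemma "$K_a\vee\overline{K_w}\supseteq T_n$ whenever $a+w\ge n$ and $w\le\alpha(T_n)$"; one must also check that the parameter inequalities used ($n-1\ge t$, $3t-3\le\alpha(T_n)$ since $\alpha(T_n)\ge\lceil n/2\rceil$, etc.) hold, which is exactly the role of the hypothesis that $n$ is sufficiently large.
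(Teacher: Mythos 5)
Your proposal is correct, and its opening reduction coincides with the paper's: both peel off the $m-2$ red copies of $K_{n-1}$ coming from $H$, use the leaf-attachment embedding to force all edges from these cliques to the remaining $n+t-2$ vertices to be blue, and observe that those $n+t-2$ vertices carry no blue matching of size $t$ (else a blue $tK_{m}$ could be assembled via transversals of the cliques). Where you genuinely diverge is in how that $(n+t-2)$-vertex colouring is handled. The paper stays inside it and argues directly: it takes a maximum blue matching $sK_{2}$, notes the uncovered set $U$ is a red clique, classifies the matching edges by their blue neighbours in $U$ (types (I)--(III)), and a counting-plus-embedding argument ($K_{p}\vee qK_{1}$ with $p+q\geq n$ contains a red $T_{n}$) forces $b=c=0$ and $s=t-1$, after which $U$ together with the vertices $v_{j}$ forms the desired red $K_{n-1}$. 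You instead recognize the leftover colouring as a Ramsey critical graph for $T_{n}$ versus $tK_{2}$ (using $r(T_{n},tK_{2})=n+t-1$) and classify those by induction on $t$: Gallai--Edmonds either yields a vertex lying in every maximum blue matching, which you delete and recurse, or makes every blue component factor-critical, giving a spanning red $K_{a}\vee\overline{K_{w}}$ with $w\leq 3t-3$ and hence a red $T_{n}$. Both routes are sound; the paper's is elementary and self-contained (only Hall-type/augmenting and counting arguments, consistent with its toolkit), while yours is structurally cleaner, isolates the $m=2$ classification as a reusable statement, and makes explicit the independent-set condition behind the ``$T_{n}$ is bipartite'' embedding that the paper leaves implicit -- at the price of invoking the Gallai--Edmonds theorem, which the paper never needs. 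The only bookkeeping point to note is that your induction on $t$ uses the statement for $t-1$ with the same $n$, so the ``$n$ sufficiently large'' threshold should be taken monotone in $t$, which is harmless.
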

  \begin{proof}
    Let $R$ and $B$ be the graphs induced by the red and blue edges of $G$, respectively. Note that $H\in \mathcal{G}_{n, m- 1, t}$, and thus $R[V(H)]= (m- 2)K_{n- 1}\cup H'= \bigcup_{i= 1}^{m- 2}H_{i}\cup H'$, where $H_{i}$ is a red copy of $K_{n- 1}$ for each $i\in [m- 2]$ and $v(H')= t- 1$. Let $V= V(G)\backslash \bigcup_{i= 1}^{m- 2} V(H_{i})$, and thus $|V|= n+ t -2$. Note that $E_{G}(V(H_{i}), V(G)\backslash V(H_{i}))$ is blue for each $i\in [m- 2]$ since $H_{i}$ is a red copy of $K_{n- 1}$ for each $i\in [m- 2]$ and $G\not\rightarrow (T_{n}, tK_{m})$. If $G[V]$ contains a red copy of $K_{n- 1}$ (denote it by $H_{m- 1}$), then $E_{G}(V(H_{m- 1}), V(G)\backslash V(H_{m- 1}))$ is blue since $G\not\rightarrow (T_{n}, tK_{m})$. Thus, $G\in \mathcal{G}_{n, m, t}$ and we are done.

    In the following, we will prove that $G[V]$ contains a red copy of $K_{n- 1}$ to finish the proof. Let $K= sK_{2}= \{u_{j}v_{j}, j\in [s]\}$ be a maximum blue matching of $G[V]$. Recall that $E_{G}\left(V, \bigcup_{i= 1}^{m- 2} V(H_{i})\right)$ is blue. If $s\geq t$, then $\{u_{j}, v_{j}\}$ and one vertex of $H_{i}$ for each $i\in [m- 2]$ form a blue copy of $K_{m}$, and $G$ contains a blue copy of $tK_{m}$ since $n$ is sufficiently large. It is a contradiction since $G\not\rightarrow (T_{n}, tK_{m})$. Consequently, $s\leq t- 1$. Let $U= V\backslash V(K)$, and thus $|U|= n+ t- 2- 2s\geq 2$ since $n$ is sufficiently large. Note that $G[U]$ is a red copy of $K_{n+ t- 2- 2s}$ since $sK_{2}$ is a maximum blue matching of $G[V]$. For convenience in the following, we may assume that $|N_{B}(u_{j})\cap U|\geq |N_{B}(v_{j})\cap U|$ for each $j\in [s]$.

    \begin{claim}\label{claim3.2}
      $|N_{B}(v_{j})\cap U|= 0$ or $|(N_{B}(u_{j})\cup N_{B}(v_{j}))\cap U|\leq 1$ for each $j\in [m- 1]$.
    \end{claim}
    \begin{proof}
      Recall that $|U|\geq 2$. If the assertion is false, then there are vertices $w, w'\in U$ such that $w\in N_{B}(u_{j})\cap U, w'\in N_{B}(v_{j})\cap U$ and $w\neq w'$. Note that $G[V]$ contains a blue copy of $(s+ 1)K_{2}$ (replace $u_{j}v_{j}$ with $wu_{j}, w'v_{j}$). It is a contradiction since $sK_{2}$ is a maximum blue matching of $G[V]$.\q
    \end{proof}

    \noindent By Claim \ref{claim3.2}, we can decompose $E(K)$ into three types as follows.

    (I) $N_{B}(u_{j})\cap U\neq \emptyset$ and $N_{B}(v_{j})\cap U= \emptyset$.

    (II) $N_{B}(u_{j})\cap U= N_{B}(v_{j})\cap U$ and $|N_{B}(u_{j})\cap U|= |N_{B}(v_{j})\cap U|= 1$.

    (III) $N_{B}(u_{j})\cap U= \emptyset$ and $N_{B}(v_{j})\cap U= \emptyset$.

    \noindent We may assume that there are $a, b$ and $c$ edges of type (I), (II) and (III) in $E(K)$, respectively. Then $a\geq 0, b\geq 0, c\geq 0$ and $a+ b+ c= s\leq t- 1$. Furthermore, we may assume that $u_{j}v_{j}$ is of type (I) for each $j\in [a]$, $u_{j}v_{j}$ is of type (II) for each $j\in [a+ b]\backslash [a]$ and $u_{j}v_{j}$ is of type (III) for each $j\in [s]\backslash [a+ b]$. Let $\{w_{j}\}= N_{B}(u_{j})\cap U= N_{B}(v_{j})\cap U$ for each $j\in [a+ b]\backslash [a]$ and let $W= \{w_{j}, j\in [a+ b]\backslash [a]\}$. Note that $x= |W|\leq b$ since $w_{j}$ may be the same. Let $A= \{v_{j}, j\in [a]\}$ and $B= \{u_{j}, j\in [s]\backslash [a]\}\cup \{v_{j}, j\in [s]\backslash [a]\}$. Note that $E_{G}(U\backslash W, A\cup B\})$ is red by the definition of the edges of type (I), (II) and (III). Furthermore, $E_{G}(U\backslash W, W)$ is red since $G[U]$ is a red copy of $K_{n+ t- 2- 2s}$. Thus, $G[V]$ contains a red copy of $K_{n+ t- 2- 2s- x}\vee (a+ 2b+ 2c+ x)K_{1}$. If $n+ t- 2- 2s- x+ a+ 2b+ 2c+ x\geq n$, then $G[V]$ contains a red copy of $T_{n}$ since $T_{n}$ is a bipartite graph and $n$ is sufficiently large. It is a contradiction since $G\not\rightarrow (T_{n}, tK_{m})$. Thus,
    $$n- 1\geq n+ t- 2- 2s- x+ a+ 2b+ 2c+ x= n+ t- 2- s+ b+ c.$$
    Consequently, $b= c= 0$ and $s= t- 1$ since $b\geq 0, c\geq 0$ and $s\leq t- 1$. Moreover, $a= s= t- 1$ since $a+ b+ c= s$.

    \begin{claim}\label{claim3.3}
      $N_{B}(u_{j})\cap U= U$ for each $j\in [s]$. Furthermore, $G[A]$ is a red copy of $K_{s}$.
    \end{claim}
    \begin{proof}
      Recall that $G[U\cup A]$ contains a red copy of $K_{n+ t- 2- 2s}\vee aK_{1}= K_{n- t}\vee (t- 1)K_{1}$ by the definition of edges of type (I). If there is $j'\in [s]$ such that $|N_{R}(u_{j'})\cap U|\geq 1$, then $G[U\cup A\cup \{u_{j'}\}]$ contains a red copy of $T_{n}$ that contains $u_{j'}$ as a leaf, since $T_{n}$ is a bipartite graph and $n$ is sufficiently large. It is a contradiction since $G\not\rightarrow (T_{n}, tK_{m})$. Consequently, $N_{R}(u_{j})\cap U= \emptyset$ and $N_{B}(u_{j})\cap U= U$ for each $j\in [s]$.

      Recall that $|U|\geq 2$. Let $w, w'\in U$ be vertices such that $w\neq w'$. Note that $wu_{j}$ and $w'u_{j}$ are blue for each $j\in [s]$ by the first part. If there are $i', j'\in [s]$ and $i'\neq j'$ such that $v_{i'}v_{j'}$ is blue, then $G[V]$ contains a blue copy of $(s+ 1)K_{2}$ (replace $u_{i'}v_{i'}, u_{j'}v_{j'}$ with $wu_{i'}, w'u_{j'}, v_{i'}v_{j'}$). It is a contradiction since $sK_{2}$ is a maximum blue matching of $G[V]$.\q
    \end{proof}

    Note that $G[U\cup A]$ is a red copy of $K_{n+ t- 2- 2s+ a}$ by the definition of the edges of type (I) and Claim \ref{claim3.3}. Thus, $G[V]$ contains a red copy of $K_{n- 1}$ since $a+ b+ c= s= t- 1$ and $b= c= 0$, and we are done.\q
  \end{proof}

  A {\it suspended path} in a graph is a path all of the internal vertices have degree two. An {\it end-edge} in a graph is an edge one of whose end vertices has degree one. A {\it talon} in a graph is a star consisting of end edges. The following lemma tells us that a large tree contains one of the above structures.

  \begin{lemma}{\rm{(Burr and Faudree \cite{BF})}}\label{lemma3.4}
    Any tree on $n$ vertices contains a suspended path on $\alpha$ vertices, or $\beta$ independent end-edges, or a talon with $\lfloor \frac{n}{4\alpha\beta}\rfloor$ edges.
  \end{lemma}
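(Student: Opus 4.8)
The plan is to prove the contrapositive of the first two alternatives: I assume that a tree $T$ on $n$ vertices contains neither a suspended path on $\alpha$ vertices nor $\beta$ independent end-edges, and I deduce that it contains a talon with $\lfloor n/(4\alpha\beta)\rfloor$ edges. The degenerate instances $\alpha\leq 1$, $\beta\leq 1$ and $n<4\alpha\beta$ can be dealt with at once (in the last case $\lfloor n/(4\alpha\beta)\rfloor=0$, and a talon with no edges is trivially present), so I may assume $\alpha,\beta\geq 2$ and $n\geq 4\alpha\beta$, in particular $n\geq 3$. The first real step is to recast the second hypothesis. Write $L$ for the set of leaves of $T$ and $S$ for the set of vertices adjacent to a leaf; since a leaf has a unique neighbour and, for $n\geq 3$, is itself never a support vertex, the sets $L$ and $S$ are disjoint, two end-edges are vertex-disjoint exactly when their endpoints in $S$ differ, and picking one leaf-neighbour of each vertex of $S$ exhibits $|S|$ pairwise independent end-edges. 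Hence ``$T$ has no $\beta$ independent end-edges'' is equivalent to $|S|\leq\beta-1$.

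Next I would bound $|L|$ from below using the first hypothesis. Let $B$ be the set of vertices of degree at least $3$, let $d_{2}$ be the number of vertices of degree $2$, and form the topological tree $T'$ on vertex set $B\cup L$ by suppressing every degree-$2$ vertex of $T$. Then $T'$ is a tree, so it has $|B|+|L|-1$ edges, and each of its edges corresponds to a path of $T$ all of whose internal vertices have degree $2$, i.e.\ a suspended path; by hypothesis such a path has fewer than $\alpha$ vertices, hence fewer than $\alpha$ vertices of degree $2$. Combining this with the identity $\sum_{v}(2-d_{T}(v))=2$, which yields $|B|\leq|L|-2$, I obtain
$$n=|L|+|B|+d_{2}<|L|+|B|+\alpha\bigl(|B|+|L|-1\bigr)<(2+2\alpha)|L|\leq 4\alpha|L|,$$
so $|L|>n/(4\alpha)$.

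A pigeonhole step then finishes: the $|L|$ leaves are attached to at most $\beta-1$ vertices of $S$, so some $s\in S$ has more than $|L|/(\beta-1)>n/\bigl(4\alpha(\beta-1)\bigr)\geq n/(4\alpha\beta)$ leaf-neighbours; the star at $s$ through these leaves is a talon, and since its number of edges is an integer exceeding $n/(4\alpha\beta)$, it has at least $\lfloor n/(4\alpha\beta)\rfloor$ edges, as required.

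I do not anticipate a genuine obstacle. The two points needing a little care are the clean identification of ``$\beta$ independent end-edges'' with ``$\beta$ distinct support vertices'', and tracking constants so that the final bound is exactly $\lfloor n/(4\alpha\beta)\rfloor$; the latter comes out of the two crude estimates $|B|\leq|L|-2$ and ``fewer than $\alpha$ degree-$2$ vertices per branch of $T'$'' together with $2+2\alpha\leq 4\alpha$.\q
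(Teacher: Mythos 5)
The paper offers no proof of this lemma; it is quoted verbatim from Burr and Faudree \cite{BF}, so there is no internal argument to compare against. Your proof is correct and self-contained, and it follows the natural (and essentially the original) route: assume no suspended path on $\alpha$ vertices and no $\beta$ independent end-edges; the identification of independent end-edges with distinct support vertices gives $|S|\leq \beta-1$; suppressing degree-$2$ vertices gives a tree on $B\cup L$ with $|B|+|L|-1$ edges, each corresponding to a maximal suspended path with fewer than $\alpha$ vertices, and together with $|B|\leq |L|-2$ (from $\sum_v(2-d_T(v))=2$) this yields $|L|>n/(4\alpha)$; pigeonholing the leaves over at most $\beta-1$ support vertices produces the talon. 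The constants check out: $|L|+|B|+\alpha(|B|+|L|-1)\leq 2|L|-2+\alpha(2|L|-3)<(2+2\alpha)|L|\leq 4\alpha|L|$. Two cosmetic points: in the pigeonhole step you should say some $s\in S$ has \emph{at least} $|L|/(\beta-1)$ leaf-neighbours (strict excess over $n/(4\alpha\beta)$ is then recovered from the strict bound $|L|>n/(4\alpha)$), and it is worth stating explicitly that a talon with at least $k$ edges contains one with exactly $\lfloor n/(4\alpha\beta)\rfloor$ edges, which you implicitly use. Neither affects correctness.
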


  \noindent The following structure lemma also plays an important role in our proof.

  \begin{lemma}{\rm{(Erd\H{o}s, Faudree, Rousseau and Shelp \cite{EFRH})}}\label{lemma3.5}
    Let $a, b, c$ and $d$ be positive integers such that $a\geq b(c- 1)+ d$. Consider a $K_{a+ b}$ on the vertex set $\{x_{1}, \dots, x_{a}, y_{1}, \dots, y_{b}\}$ whose edges are red-blue edge-colored. Suppose that $x_{1}x_{2}\cdots x_{a}$ is a red path that joins $x_{1}$ to $x_{a}$ and no red path that joins $x_{1}$ and $x_{a}$ has length exactly $a$. Then there is a blue copy of $K_{c}$ or $d$ of the $x_{i}$'s that are joined in blue to all $y_{j}$, where $j\in [b]$.
  \end{lemma}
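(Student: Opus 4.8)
I would prove the contrapositive: assuming the colored $K_{a+b}$ contains no blue $K_c$, I will exhibit $d$ of the vertices $x_1,\dots,x_a$ that are joined in blue to every $y_j$. Call $x_i$ \emph{good} if $x_iy_j$ is blue for all $j\in[b]$, and \emph{bad} otherwise. The whole argument reduces to one claim: \emph{each $y_j$ is red-adjacent to at most $c-1$ of the $x_i$'s.} Granting this, the number of bad vertices is at most $\sum_{j=1}^{b}\bigl|\{i:x_iy_j\text{ red}\}\bigr|\le b(c-1)$, so at least $a-b(c-1)\ge d$ of the $x_i$'s are good, as required. (We may assume $c\ge 2$, since a blue $K_1$ always exists.)

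The engine of the proof is a rerouting identity. Suppose $x_py_j$ and $x_qy_j$ are both red with $1\le p<q\le a$ and $q<a$. If the chord $x_{p+1}x_{q+1}$ were red, then
$$x_1x_2\cdots x_p\;y_j\;x_qx_{q-1}\cdots x_{p+1}\;x_{q+1}x_{q+2}\cdots x_a$$
would be a red path from $x_1$ to $x_a$ using all $a+1$ vertices $\{x_1,\dots,x_a,y_j\}$ — that is, a red $(x_1,x_a)$-path of length exactly $a$ — contradicting the hypothesis. Hence $x_{p+1}x_{q+1}$ is blue; symmetrically (reversing the path $x_1\cdots x_a$), if $p>1$ then $x_{p-1}x_{q-1}$ is blue. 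Taking $q=p+1$ in one of the two forms (the edge $x_{p+1}x_{p+2}$, or $x_{p-1}x_p$, is automatically red) shows in particular that $R_j:=\{i\in[a]:x_iy_j\text{ red}\}$ contains no two consecutive integers.

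Now I would establish the claim. Fix $j$ and suppose, for contradiction, that $R_j=\{i_1<i_2<\cdots<i_r\}$ with $r\ge c$. Since $i_{c-1}\le i_{r-1}\le i_r-1\le a-1$, each of the positions $i_1+1,\dots,i_{c-1}+1$ lies in $[2,a]$; by the rerouting identity any two of $x_{i_1+1},\dots,x_{i_{c-1}+1}$ are joined in blue; and because $R_j$ has no two consecutive elements, none of $i_1+1,\dots,i_{c-1}+1$ lies in $R_j$, so each of $x_{i_1+1},\dots,x_{i_{c-1}+1}$ is joined in blue to $y_j$. Hence $\{y_j,x_{i_1+1},\dots,x_{i_{c-1}+1}\}$ induces a blue $K_c$, a contradiction. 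This proves the claim, and with it the lemma via the count above.

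The only genuinely delicate step is the rerouting identity itself: one must choose exactly the right reassembly of $x_1\cdots x_a$ so that inserting $y_j$ lengthens the path by exactly one edge (not zero, not more). After that everything is bookkeeping — the non-consecutivity of $R_j$, the blue clique built from the ``shifted'' red-neighborhood \emph{together with} $y_j$ (which is what makes the endpoint positions $x_1,x_a$ cause no trouble), and the final counting. I expect no serious obstacle: boundary cases such as $\{a-1,a\}\subseteq R_j$ are absorbed by the mirror form of the identity, and the hypothesis $r\ge c$ leaves precisely enough room — $c-1$ shifted vertices, all of index $\le a$ — to run the argument.
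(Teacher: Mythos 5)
The paper does not prove Lemma \ref{lemma3.5} at all: it is quoted from Erd\H{o}s--Faudree--Rousseau--Schelp \cite{EFRH} and used as a black box, so there is no in-paper argument to compare yours against. Your proof is correct and self-contained: the rerouting identity (if $x_py_j$ and $x_qy_j$ are red with $p<q<a$, then a red chord $x_{p+1}x_{q+1}$ would yield a red $x_1$--$x_a$ path on all $a+1$ vertices, hence of length exactly $a$), the resulting non-consecutivity of $R_j$, the blue clique $\{y_j,x_{i_1+1},\dots,x_{i_{c-1}+1}\}$ when $|R_j|\geq c$ (the bound $i_{c-1}+1\leq a$ is checked correctly), and the final count $a-b(c-1)\geq d$ all go through. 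One cosmetic remark: consecutive elements of $R_j$ are excluded even more directly by inserting $y_j$ between $x_p$ and $x_{p+1}$, which gives the forbidden path with no chord at all and in particular covers the corner case $p=1$, $q=2=a$, where neither of your two stated forms of the identity applies; with that observation the boundary discussion in your last paragraph becomes unnecessary.
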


  \noindent Now, we are ready to prove our main theorem.
  \\

  \noindent {\it Proof of Theorem {\rm \ref{theo1.6}}:} \;\; We use induction in $m$. It is trivial for $m= 1$. We may assume that the assertion holds for $m- 1$ and $m\geq 2$. Moreover, let $G$ be a Ramsey critical graph for $T_{n}$ versus $tK_{m}$ and let $R$ be the graph induced by the red edges of $G$. Then we need to show that $G\in \mathcal{G}_{n, m, t}$ to finish the proof. We use induction in $t$. Note that the assertion holds for $t= 1$ by \cite{HI}. We may assume that the assertion holds for $t- 1$ and $t\geq 2$. In the following, we divide the argument into three cases by Lemma \ref{lemma3.4}.

  Case 1. $T_{n}$ has a suspended path of $t(m- 1)(tm- 1)+ 3t$ vertices.

  Let $T'$ form from $T_{n}$ by shortening the suspended path by $2t$ vertices. Note that $v(G)= (n- 1)(m- 1)+ t- 1\geq (n- 1)(m- 2)+ t$, and thus $G\rightarrow (T_{n}, tK_{m- 1})$ by Theorem \ref{theo1.3}. We only need to consider the latter since $G\not\rightarrow (T_{n}, tK_{m})$. Let $Y$ be the vertex set of the blue copy of $tK_{m- 1}$. Note that $v(G- Y)= (n- 1)(m- 1)+ t- 1- t(m- 1)\geq (n- 2t- 1)(m- 1)+ t$ since $t\geq 2$ and $m\geq 2$. Thus, $G- Y\rightarrow (T', tK_{m})$ by Theorem \ref{theo1.3} since $v(T')= n- 2t$. We only need to consider the former since $G\not\rightarrow (T_{n}, tK_{m})$. Recall that the red copy of $T'$ has the suspended path of $t(m- 1)(tm- 1)+ 3t- 2t= t(m- 1)(tm- 1)+ t$ vertices and denote them by $X= \{x_{1}, x_{2}, \dots, x_{a}\}$, where $a= t(m- 1)(tm- 1)+ t$. Also, denote $Y$ by $y_{1}, y_{2}, \dots, y_{b}$, where $b= t(m- 1)$. Moreover, let $c= tm$ and $d= t$. Note that $G[X\cup Y]$ is a red-blue edge-colored $K_{a+ b}$ and $a\geq b(c- 1)+ d$. Consequently, one of the following holds in $G[X\cup Y]$ by Lemma \ref{lemma3.5}.

  (1) There is a blue copy of $K_{tm}$; (2) There are $t$ of the $x_{i}$'s that are joined in blue to all $y_{j}$, where $j\in [t(m- 1)]$; (3) The suspended path of $T'$ can be lengthened by $1$ in red, keeping the same end vertices.

  If (1) or (2) holds, then $G$ contains a blue copy of $tK_{m}$ since $tK_{m}\subset K_{tm}$ and $tK_{m}\subset tK_{1}\vee tK_{m- 1}$. It is a contradiction since $G\not\rightarrow (T_{n}, tK_{m})$. Consequently, (3) holds. Let $T''$ be the red copy of the resulting tree obtained from $T'$ by lengthening the suspended path of $T'$ by 1. Note that $v(G- T'')= (n- 1)(m- 1)+ t- 1- (n- 2t+ 1)\geq (n- 1)(m- 2)+ t$ since $t\geq 2$. Then $G- T''\rightarrow (T_{n}, tK_{m- 1})$ by Theorem \ref{theo1.3}. We only need to consider the latter since $G\not\rightarrow (T_{n}, tK_{m})$. Consequently, we get a red copy of $T''$ and a blue copy of $tK_{m- 1}$ that are vertex-disjoint. The same argument can continue using Lemma \ref{lemma3.5} until $v(T'')= n- 1$ and $G- T''\not\rightarrow (T_{n}, tK_{m- 1})$. Otherwise, $G$ contains a red copy of $T_{n}$. It is a contradiction since $G\not\rightarrow (T_{n}, tK_{m})$. Note that $v(G- T'')= (n- 1)(m- 1)+ t- 1- (n- 1)= (n- 1)(m- 2)+ t- 1$. Thus, $G- T''\in \mathcal{G}_{n, m- 1, t}$ by the induction hypothesis. Consequently, $G\in \mathcal{G}_{n, m, t}$ by Lemma \ref{lemma3.1} since $G$ is a Ramsey critical graph for $T_{n}$ and $tK_{m}$.

  Case 2. $T_{n}$ has at least $2t$ independent end-edges.

  Let $T'$ form from $T_{n}$ by removing $2t$ independent end-edges and the corresponding $2t$ end vertices. Note that $v(G)= (n- 1)(m- 1)+ t- 1$. Then $G\rightarrow (T_{n}, (t- 1)K_{m})$ by Theorem \ref{theo1.3}. We only need to consider the latter since $G\not\rightarrow (T_{n}, tK_{m})$, and let $C$ be the blue copy of $(t- 1)K_{m}$. Note that $v(G- V(C))= (n- 1)(m- 1)+ t- 1- (t- 1)m\geq (n- 1)(m- 2)+ t$ since $n$ is sufficiently large. Thus, $G- V(C)\rightarrow (T_{n}, tK_{m- 1})$ by Theorem \ref{theo1.3}. We only need to consider the latter since $G\not\rightarrow (T_{n}, tK_{m})$, and let $D$ be the blue copy of $tK_{m- 1}$. Note that $v(G- V(C)- V(D))= (n- 1)(m- 1)+ t- 1- (t- 1)m- t(m- 1)= (n- 2t)(m- 1)\geq (n- 2t- 1)(m- 1)+ 1$ since $m\geq 2$. Thus, $G- V(C)- V(D)\rightarrow (T', K_{m})$ by Theorem \ref{theo1.2} since $v(T')= n- 2t$. If the latter holds, then it together with $C$ forms a blue copy of $tK_{m}$. It is a contradiction since $G\not\rightarrow (T_{n}, tK_{m})$. Thus, we only need to consider the former. Let $X$ be the vertices of the red copy of $T'$ that are the end vertices of the removed independent end-edges, and let $Y$ be the vertices not in $T'$. Note that $|X|= 2t, |Y|= (n- 1)(m- 1)+ t- 1- (n- 2t)= (n- 1)(m- 2)+ 3t- 2, D\subset Y$ and $T'\subset G- V(C)- V(D)$. Let $a= 2t$ and $b= (n- 1)(m- 2)+ 3t- 2$. Note that $a\leq b$ since $t\geq 2$ and $E_{G}(X, Y)$ is a red-blue edge-colored $K_{a, b}$. Consequently, one of the following holds in $E_{G}(X, Y)$ by Theorem \ref{theo2.2}.

  (1) There is a red copy of matching of size $2t$ between $X$ and $Y$; (2) There is a blue copy of $K_{c+ 1, b- c}$ with $c+ 1$ vertices in $X$, where $0\leq c\leq a- 1$.

  If (1) holds, then $G$ contains a red copy of $T_{n}$. It is a contradiction since $G\not\rightarrow (T_{n}, tK_{m})$. Consequently, (2) holds. Note that every vertex in $G- V(C)- V(D)$ has at least one red edge connected to each component of $D$. Otherwise, we have a blue copy of $K_{m}$ in $G- V(C)$, and thus together with $C$ forms a blue copy of $tK_{m}$. It is a contradiction since $G\not\rightarrow (T_{n}, tK_{m})$. Consequently, $b- c\leq b- t$, and thus $c\geq t$. Moreover, $b- c\geq b- a+ 1= (n- 1)(m- 2)+ t- 1$ since $c\leq a- 1$. Consequently, $G$ contains a blue copy of $K_{t, (n- 1)(m- 2)+ t- 1}$. Let $Z$ and $Z'$ be partite sets of the blue copy of $K_{t, (n- 1)(m- 2)+ t- 1}$ such that $|Z|= (n- 1)(m- 2)+ t- 1$ and $|Z'|= t$. If $G[Z]\rightarrow (T_{n}, tK_{m- 1})$, then $G\rightarrow (T_{n}, tK_{m})$ since $tK_{m}\subset tK_{m- 1}\vee tK_{1}$. It is a contradiction since $G\not\rightarrow (T_{n}, tK_{m})$. Thus, $G[Z]\not\rightarrow (T_{n}, tK_{m- 1})$. Then $G[Z]\in \mathcal{G}_{n, m- 1, t}$ by the induction hypothesis since $|Z|= (n- 1)(m- 2)+ t- 1$. Consequently, $G\in \mathcal{G}_{n, m, t}$ by Lemma \ref{lemma3.1} since $G$ is a Ramsey critical graph for $T_{n}$ and $tK_{m}$.

  Case 3. $T_{n}$ has a talon with at least $c= \left\lfloor \frac{n}{4\cdot (t(m- 1)(tm- 1)+ 3t)\cdot 2t}\right\rfloor$ edges.

  Denote the center of the talon by $x$. Let $T'$ form from $T_{n}$ by removing $c$ end-edges and the corresponding $c$ end vertices of the talon. If $G\not\rightarrow (K_{1,n- 1}, tK_{m})$, then $G\in \mathcal{G}_{n, m, t}$ by Theorem \ref{lemma2.3}, and we are done. Thus, $G\rightarrow (K_{1, n- 1}, tK_{m})$. We only need to consider the former since $G\not\rightarrow (T_{n}, tK_{m})$. Moreover, let $y$ be the center of the red copy of $K_{1, n- 1}$. If $G- \{y\}\rightarrow (T_{n}, (t- 1)K_{m})$, then we only need to consider the latter since $G\not\rightarrow (T_{n}, tK_{m})$. Moreover, let $F$ be the blue copy of $(t- 1)K_{m}$ and let $G'= G- V(F)$. If $d_{R}(u)\geq n- c$ for each vertex $u\in V(G)$, then we can embed a red copy of $T'$ in $G$ by putting $x$ in $y$ greedily since $v(T')= n- c$. Moreover, $G$ contains a red copy of $T_{n}$. It is a contradiction since $G\not\rightarrow (T_{n}, tK_{m})$. Consequently, there is a vertex $z\in V(G)$ such that $d_{B}(z)\geq (m- 1)(n- 1)+ t- 1- (n- c)$, and thus $|N_{B}(z)\cap V(G')|\geq (n- 1)(m- 1)+ t- 1- (n- c)- (t- 1)m\geq (n- 1)(m- 2)+ 1$ since $n$ is sufficiently large. Then $G'[N_{B}(z)]$ contains a red copy of $T_{n}$ or a blue copy of $K_{m- 1}$ (it together with $z$ forms a blue copy of $K_{m}$. Together with $F$ forms a blue copy of $tK_{m}$) by Theorem \ref{theo1.2}. In any case, it is a contradiction since $G\not\rightarrow (T_{n}, tK_{m})$. Consequently, $G- \{y\}\not\rightarrow (T_{n}, (t- 1)K_{m})$. Note that $G- \{y\}\in \mathcal{G}_{n, m, t- 1}$ by the induction hypothesis. Consequently, $G\in \mathcal{G}_{n, m, t}$ since $G\not\rightarrow (T_{n}, tK_{m})$.

  All cases have been discussed, and the proof is complete.\q

\section{The star-critical Ramsey numbers}
  In this section, we determine the star-critical Ramsey number for $T_{n}$ versus $tK_{m}$. Let $H\in \mathcal{G}_{n, m, t}$ be a graph, and let $\bigcup_{i= 1}^{m}H_{i}$ be the graph induced by the red edges of $H$, where $H_{i}$ is a red copy of $K_{n- 1}$ for each $i\in [m- 1]$ and $v(H_{m})= t- 1$.
  \\

  \noindent {\it Proof of Theorem {\rm \ref{theo1.7}}:}\;\;
  We first show that $r_{*}(T_{n},tK_{m})\geq (n- 1)(m- 2)+ t$. Let $u\not\in V(H)$ be a vertex and let $N(u)= \bigcup_{i= 2}^{m} V(H_{i})$, and thus $d(u)= (n- 1)(m- 2)+ t- 1$. Moreover, color all edges between $\{u\}$ and $V(H)$ in blue and let $G'$ be the resulting graph. Note that $v(G')= (n- 1)(m- 1)+ t= r(T_{n}, tK_{m})$ and $G'\not\rightarrow (T_{n}, tK_{m})$. Consequently, $r_{*}(T_{n},tK_{m})\geq (n- 1)(m- 2)+ t$.

  In the following, we will show that $r_{*}(T_{n},tK_{m})\leq (n- 1)(m- 2)+ t$. Let $V= V(K_{N- 1})$ and $v\not\in V$ be a vertex, where $N= r(T_{n}, tK_{m})$. Moreover, add $(n- 1)(m- 2)+ t$ edges between $\{v\}$ and $V$, and let $G$ be the resulting graph. We will prove that $G\rightarrow (T_{n}, tK_{m})$. If $G[V]\rightarrow (T_{n}, tK_{m})$, then we are done. Thus, we may assume that $G[V]\not\rightarrow (T_{n}, tK_{m})$. Consequently, $G[V]\in \mathcal{G}_{n, m, t}$ by Theorem \ref{theo1.6} and we may assume that $G[V]= H$. Note that there are vertices $v_{i}\in V(H_{i})$ such that $vv_{i}\in E(G)$ for each $i\in [m- 1]$ since $d(v)= (n- 1)(m- 2)+ t$. If $vv_{i'}$ is red for some $i'\in [m- 1]$, then $G$ contains a red copy of $T_{n}$ since $H_{i'}$ is a red copy of $K_{n- 1}$, and we are done. Thus, we may assume that $vv_{i}$ is blue for each $i\in [m- 1]$. Let $A= \{v\}\cup \{v_{i}, i\in [m- 1]\}$. Note that $G[A]$ is a blue copy of $K_{m}$. Moreover, each vertex $w\in V(H_{m})$ and one vertex of $H_{i}$ for each $i\in [m- 1]$ form a blue copy of $K_{m}$. Furthermore, they are vertex-disjoint since $n$ is sufficiently large. Consequently, $G$ contains a blue copy of $tK_{m}$, and we are done.\q


\begin{thebibliography}{JluR00}
    \bibitem{ABS} P. Allen, G. Brightwell and J. Skokan, Ramsey-goodness -- and otherwise, \emph{Combinatorica}, {\bf33} (2013), 125--160.

    \bibitem{BPS} I. Balla, A. Pokrovskiy and B. Sudakov, Ramsey goodness of bounded degree trees, \emph{Combinatorics, Probability and Computing}, {\bf27} (2018), 289--309.

    \bibitem{Bu} M. Budden, Star-critical Ramsey numbers for graphs, Springer, Cham, 2023.

    \bibitem{B} S. Burr, Ramsey numbers involving graphs with long suspended paths, \emph{Journal of the London Mathematical Society}, {\bf 24} (1981), 405--413.

    \bibitem{B1} S. Burr, On the Ramsey numbers $r(G, nH)$ and $r(nG, nH)$ when $n$ is large, {\it Discrete Mathematics}, \textbf{65} (1987), 215--229.

    \bibitem{BE} S. Burr and P. Erd\H{o}s, Generalizations of a Ramsey-theoretic result of Chv\'atal,  \emph{Journal of Graph Theory}, {\bf7} (1983), 39--51.

    \bibitem{BES} S. Burr, P. Erd\H{o}s and P.Spencer, Ramsey theorems for multiple copies of graphs.  \emph{Transactions of the American Mathematical Society}, {\bf209} (1975), 87--99.

    \bibitem{BF} S. Burr and R. Faudree, On graphs $G$ for which all large trees are $G$-good,  \emph{Graphs and Combinatorics}, {\bf9} (1993), 305--313.

    \bibitem{C} V. Chv\'atal, Tree-complete graph Ramsey numbers, \emph{Journal of Graph Theory}, {\bf 1} (1977), 93.

    \bibitem{CFS} D. Conlon, J. Fox and B. Sudakov, Recent developments in graph Ramsey theory, \emph{Surveys in Combinatorics}, London Mathematical Society Lecture Note Series, (2015), 49--118.

    \bibitem{EFRH} P. Erd\H{o}s, R. Faudree, C. Rousseau and R. Shelp, Multipartite graph--Sparse graph Ramsey numbers,  \emph{Combinatorica}, {\bf5} (1985), 311--318.

    \bibitem{HS} A. Hajnal and E. Szemer\'{e}di, Proof of a conjecture of P. Erd\H{o}s. In \emph{Combinatorial Theory and its Application} (1970) (P. Erd\H{o}s, A. R\'{e}nyi, and V. T. S\'{o}s, eds), North-Holland, London, pp. 601--623.

    \bibitem{H} P. Hall, On representatives of subsets, \emph{Journal of the London Mathematical Society}, {\bf10} (1935), 26--30.

    \bibitem{H1} J. Hook, The classification of critical graphs and star-critical Ramsey numbers, Ph.D. thesis, Lehigh University, 2010.

    \bibitem{HI} J. Hook and G. Isaak, Star-critical Ramsey numbers, \emph{Discrete Applied Mathematics}, {\bf159} (2011), 328--334.

    \bibitem{HL} S. Hu and Z. Luo, Ramsey numbers for a large tree versus multiple copies of complete graphs of different sizes, \emph{Discussiones Mathematicae Graph Theory}, {\bf45} (2025), 419--429.

    \bibitem{HP1} S. Hu and Y. Peng, The Ramsey number for a forest versus disjoint union of complete graphs, \emph{Graphs and Combinatorics}, {\bf39} (2023), 26.

    \bibitem{LP} Z. Luo and Y. Peng, A large tree is $tK_{m}$-good, \emph{Discrete Mathematics}, {\bf346} (2023), 113502.

    \bibitem{KK} H. A. Kierstead and A. Kostochka, A short proof of the Hajnal-Szemer\'{e}di theorem on equitable colouring, \emph{Combinatorics, Probability and Computing,} {\bf17} (2008), 265--270.

    \bibitem{PS} A. Pokrovskiy and B. Sudakov, Ramsey goodness of paths, \emph{Journal of Combinatorial Theory, Series B}, {\bf122} (2017), 384--390.

    \bibitem{R} S. Radziszowski, Small Ramsey numbers,  {\it Electronic Journal of Combinatorics}, DS1.16 (January 15, 2021), http://www.combinatorics.org.

    \bibitem{ZC} Y. Zhang and Y. Chen, Trichotomy and $tK_{m}$-goodness of sparse graphs, arXiv:2505.04142.

  \end{thebibliography}
\end{document}